\title[Self-dual Einstein ACH metrics]
{Self-dual Einstein ACH metrics and CR GJMS operators in dimension three}
\author{TAIJI MARUGAME}
\date{}
\newcommand\R{\mathbb{R}}
\newcommand\Ric{{\rm Ric}}
\newcommand\scal{{\rm Scal}}
\renewcommand\a{\alpha}
\renewcommand\b{\beta}
\newcommand\g{\gamma}
\renewcommand\d{\delta}
\newcommand\e{\epsilon}
\renewcommand\r{\rho}
\renewcommand\th{\theta}
\newcommand\bh{\boldsymbol{h}}
\newcommand\bth{\boldsymbol{\theta}}
\newcommand\U{\Upsilon}
\newcommand\pa{\partial}
\newcommand\ol{\overline}
\newcommand{\calE}{\mathcal{E}}
\newcommand{\wt}{\widetilde}
\newcommand{\wh}{\widehat}
\newtheorem{lem}{Lemma}[section]
\newtheorem{thm}[lem]{Theorem}
\newtheorem{prop}[lem]{Proposition}
\theoremstyle{definition}
\newtheorem{dfn}[lem]{Definition}
\newtheorem{rem}[lem]{\it Remark}
\numberwithin{equation}{section}
\address{Institute of Mathematics, Academia Sinica, Astronomy-Mathematics Building, No. 1, Sec. 4, Roosevelt Road, Taipei 10617, TAIWAN }
\email{marugame@gate.sinica.edu.tw}
\keywords{ACH metrics; the Einstein equation; self-duality; CR manifolds; CR invariant differential operators} 
\subjclass[2010]{Primary~32V05, Secondary~53A55}
\begin{document}

\begin{abstract} 
By refining Matsumoto's construction of Einstein ACH metrics, we construct a one parameter family of ACH metrics which solve the Einstein equation to infinite order and have a given three dimensional CR structure at infinity. When the parameter is 0, the metric is self-dual to infinite order. As an application, we give another proof of the fact that three dimensional CR manifolds admit CR invariant powers of the sublaplacian (CR GJMS operators) of all orders, which has been proved by Gover--Graham. We also prove the convergence of the formal solutions when the CR structure is real analytic.
\end{abstract}
\maketitle

\section{Introduction}
The GJMS operator $\mathcal{P}_{2k}$ on a conformal manifold of dimension $N$ is an  invariant linear differential operator acting on conformal densities of weight $k-N/2$ whose principal part is the power $\Delta^k$ of the Laplacian \cite{GJMS}. It plays an important role in geometric analysis on conformal manifolds, and is also related to a fundamental curvature quantity, called the $Q$-curvature, whose integral gives a global conformal invariant \cite{FG1, FH, GZ}. The GJMS operator is constructed via the (Fefferman--Graham) ambient metric \cite{FG2} or equivalently via the Poincar\'e metric whose boundary at infinity is the given conformal manifold \cite{FG1, GZ}. The ambient metric is a formal solution to the Ricci flat equation, which corresponds to the Einstein equation for the Poincar\'e metric. When the dimension $N$ is odd, the equation can be solved to infinite order and $\mathcal{P}_{2k}$ is defined for all $k\ge1$. On the other hand, when $N$ is even, an obstruction to the existence of a formal solution appears, and $\mathcal{P}_{2k}$ can only be defined for $1\le k\le N/2$ due to the ambiguity of the ambient metric at higher orders. Moreover, it is known that this result of the existence of $\mathcal{P}_{2k}$ is sharp \cite{GH}.

The CR counterpart of these operators are CR invariant powers of the sublaplacian 
$$
P_{2k}: \calE\Bigl(\frac{k-n-1}{2}, \frac{k-n-1}{2}\Bigr)\longrightarrow 
\calE\Bigl(\frac{-k-n-1}{2}, \frac{-k-n-1}{2}\Bigr)
$$
on a $(2n+1)$-dimensional CR manifold $M$, which are called the CR GJMS operators or the Gover--Graham operators \cite{GG, HPT}. Here, $\calE(w, w^\prime)$ is a complex line bundle over $M$ called the CR density of weight $(w,w^\prime)$; see \S2.1 for the definition.
One can associate a conformal structure to a circle bundle over $M$, called the Fefferman conformal structure \cite{F}, and apply the GJMS construction to produce $P_{2k}$ for $1\le k\le n+1$. Gover--Graham \cite{GG} gave more operators by using techniques of CR tractor calculus; they proved that for each $(w, w^\prime)$ such that $k=w+w^\prime+n+1\in
\mathbb{N}_+$ and $(w, w^\prime)\notin \mathbb{N}\times\mathbb{N}$, there exists a CR invariant linear differential operator $P_{w, w^\prime}: \calE(w, w^\prime)\rightarrow\calE(w-k, w^\prime-k)$ whose principal part is $\Delta_b^k$. In cases where $w=w^\prime$, these operators provide CR invariant modifications of $\Delta_b^k$ for all $k$ with $k \equiv n\ {\rm mod}\ 2$.
When $n=1$, even more operators can be constructed: CR structure is a Cartan geometry modeled on the CR sphere $S^{2n+1}=SU(n+1, 1)/P$, where $P$ is the isotropy subgroup of a point in $S^{2n+1}$, and three dimensional CR structure has a special feature from this viewpoint in that $P$ is a Borel subgroup. Then the BGG machinery developed in \cite{CSS} gives 
operators $P_{w, w^\prime}$ for $(w, w^\prime)\in\mathbb{N}\times\mathbb{N}$ when $n=1$. 
Thus one has:
\begin{thm}[{\cite[Theorem 1.3]{GG}}]\label{GJMS-three}
Let $M$ be a three dimensional strictly pseudoconvex CR manifold. For each $(w, w^\prime)$ such that $k=w+w^\prime+2\in\mathbb{N}_+$, there exists a CR invariant linear differential operator $P_{w, w^\prime}: \calE(w, w^\prime)\rightarrow\calE(w-k, w^\prime-k)$ on $M$, whose principal part is $\Delta_b^k$. 
\end{thm}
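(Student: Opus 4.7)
The approach is to define $P_{w, w^\prime}$ via the asymptotic analysis of the Laplacian on the self-dual Einstein ACH metric $g_+$ produced by the paper's main construction (the $\epsilon = 0$ member of its one-parameter family), adapting to all orders the procedure by which Hislop--Perry--Tang constructed the CR GJMS operators of low order from ACH Einstein metrics, itself a CR counterpart of the Graham--Zworski scattering construction on Poincar\'e--Einstein manifolds.

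After fixing a pseudo-Hermitian scale $\theta$ on $M$ and the associated boundary defining function $\rho$ for the ACH compactification, I would lift a section $f \in \calE(w, w^\prime)$ to a function on an appropriate $\mathbb{C}^\times$-bundle over $M$ and extend it to a function $\widetilde f$ on the interior whose leading boundary behavior is prescribed by the biweight $(w, w^\prime)$. I would then solve the eigenvalue equation $\Delta_{g_+} u = \lambda_{w, w^\prime} u$ formally in powers of $\rho$, starting from $u = \widetilde f + O(\rho)$, for the value $\lambda_{w, w^\prime}$ at which $\widetilde f$ is an indicial solution.

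The two indicial roots associated with $(w, w^\prime)$ differ by $2k$ where $k = w + w^\prime + 2$; hence the recursion determines the expansion of $u$ uniquely up to order $\rho^{2k}$ and then produces an obstruction at that order, which one absorbs by a term of the form $\rho^{2k}(\log \rho)\, g$. Setting $P_{w, w^\prime} f := g$ (up to a normalizing constant) yields a linear differential operator of the required type. CR invariance holds because both $g_+$ and the lift of $f$ are determined by the CR data of $M$ up to terms that do not affect the coefficient of $\rho^{2k}\log\rho$, and the principal part is $\Delta_b^k$ because $\rho^2 \Delta_{g_+}|_{\rho = 0}$ is a constant multiple of the sublaplacian $\Delta_b$, so that iterating the indicial recursion $k$ times reproduces $\Delta_b^k$ in the top symbol.

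The main obstacle is that the above asymptotic construction requires $g_+$ to solve the Einstein equation to order at least $\rho^{2k}$ for each $k$ of interest. For a three-dimensional CR manifold, the unrefined Matsumoto ansatz already obstructs at a low finite order in $\rho$, and this is precisely what limits the Hislop--Perry--Tang ACH approach to small $k$. The decisive input from the paper is that imposing self-duality, a condition special to dimension three, removes the obstruction and produces an Einstein ACH metric to \emph{infinite} order in $\rho$, which makes the above recursion succeed for every $k \in \mathbb{N}_+$ and thereby yields the full family of operators claimed in the theorem.
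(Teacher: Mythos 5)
There is a genuine gap: your construction can only produce the operators with $w=w^\prime$, whereas the statement covers all $(w,w^\prime)$ with $w+w^\prime+2\in\mathbb{N}_+$. The equation you propose to solve is the scalar eigenvalue equation $(\Delta_{g_+}+\tfrac{k^2}{4}-1)u=O(\r^\infty)$ for a function $u$ on the four-dimensional ACH space $M\times[0,\infty)_\r$, and the boundary datum of such a function is an honest function on $M$, i.e.\ a density of \emph{diagonal} weight; this is exactly why the paper only claims (and only proves) the theorem ``for cases in which $w=w^\prime$,'' obtaining $P_{2k}$ on $\calE(k/2-1,k/2-1)$. Your remark about lifting $f\in\calE(w,w^\prime)$ to an equivariant function on a $\mathbb{C}^\times$-bundle is the right instinct for the off-diagonal case, but it is not carried out and cannot be carried out with the metric constructed here: $g^0$ lives on $M\times[0,\infty)_\r$ with no distinguished circle bundle, complex structure, or ambient space on which the Laplacian would act equivariantly with an eigenvalue depending on $w-w^\prime$. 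The full statement is in fact Gover--Graham's theorem, proved by CR tractor calculus for $(w,w^\prime)\notin\mathbb{N}\times\mathbb{N}$ and by the BGG machinery of \v{C}ap--Slov\'ak--Sou\v{c}ek for $(w,w^\prime)\in\mathbb{N}\times\mathbb{N}$; the ACH construction in this paper is presented only as an alternative proof of the diagonal subcase, and your proposal does not close the distance to the general case.

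A secondary but substantive error is your account of what self-duality is for. In dimension three the CR structure is automatically integrable, so the CR obstruction tensor vanishes and the Einstein equation is already solvable to infinite order without any self-duality hypothesis; there is no obstruction to ``remove.'' What fails is \emph{uniqueness}: the Einstein equation leaves the $\r^4$ coefficient of $g_{11}$ and a trace part at order $\r^6$ undetermined, and an uncontrolled ambiguity there would make the coefficient of $\r^{k+2}\log\r$ — hence $P_{2k}$ — depend on arbitrary choices and fail to be CR invariant. The conditions $W^-_{IJKL}=O(\r^5)$, then $O(\r^6)$, together with the normalization $\eta=\lambda\mathcal{O}$ (forced to be a multiple of the obstruction density by Graham's uniqueness of weight $(-3,-3)$ invariants) are what pin the metric down modulo $O(\r^\infty)$ and diffeomorphism; this canonical uniqueness, not existence, is the input that makes the resulting operators well defined. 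Your outline of the indicial analysis for the diagonal case (roots $-k+2$ and $k+2$, log term at relative order $\r^{2k}$, principal part $\Delta_b^k$) is otherwise consistent with the paper.
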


In this paper, we provide a unified proof of Theorem \ref{GJMS-three} for cases in which $w=w^\prime$. To this end, we construct an ACH (asymptotically complex hyperbolic) metric on a manifold with boundary $M$ whose Taylor expansion along $M$ is completely determined by local data of $M$. 
Our ACH metric is a refinement of the ACH Einstein metric which Matsumoto \cite{Ma1, Ma2} constructed for partially integrable CR manifolds. To state the results, let us recall some basic notions related to ACH metrics. Let $M$ be a $(2n+1)$-dimensional strictly pseudoconvex partially integrable CR manifold. Namely, $M$ has a contact distribution $H\subset TM$ together with an almost complex structure $J\in End(H)$, and the eigenspace $T^{1,0}M\subset\mathbb{C}H$ with the eigenvalue $i$ satisfies the partial integrability: $[\Gamma(T^{1,0}M), \Gamma(T^{1,0}M)]\subset \Gamma(\mathbb{C} H)$. A $\Theta$-structure on a manifold 
$\overline X$ with boundary $M$ is a conformal class $[\Theta]$ of sections $\Theta\in 
\Gamma(M, T^\ast \overline X)$ such that $\Theta|_{TM}$ is a contact form on $M$. A diffeomorphism which preserves a $\Theta$-structure is called a $\Theta$-diffeomorphism. 
On the product $M\times [0, \infty)_\r$, we define the standard $\Theta$-structure by extending 
each contact form $\th$ on $M$ to $\Theta$ so that $\Theta(\pa/\pa\r)=0$. Fix a contact form
 $\th$ on $M$ and let $\{T, Z_\a\}$ be an admissible frame. We take the local frame 
$\{\mbox{\boldmath $Z$}_\infty=\r\pa_\r, \mbox{\boldmath $Z$}_0=\r^2T, \mbox{\boldmath $Z$}_\a=\r Z_\a, \mbox{\boldmath $Z$}_{\overline{\a}}=\r Z_{\overline{\a}}\}$ 
and its dual coframe $\{\boldsymbol{\th}^\infty, \boldsymbol{\th}^0,  
\boldsymbol{\th}^\a, \boldsymbol{\th}^{\overline{\a}}\}$ on $M\times (0, \infty)_\r$. Then for any ACH metric $g$ on $X$, there exists a $\Theta$-diffeomorphism 
$\Phi: M\times [0, \infty)_\r\rightarrow\overline X$ which is defined near $M$ and restricts to the identity on $M$, such that  $\Phi^\ast g=g_{IJ}\boldsymbol{\th}^I \boldsymbol{\th}^J$ satisfies
\begin{equation*}
\begin{aligned}
g_{\infty\infty}=4, \quad g_{\infty 0}=g_{\infty\a}&=0, \quad g_{00}=1+O(\r), \quad g_{0\a}=O(\r), \\ 
 g_{\a\b}=O(\r), &\quad g_{\a\overline{\b}}=h_{\a\overline{\b}}+O(\r),
\end{aligned}
\end{equation*}
where $h_{\a\overline{\b}}$ is the Levi form on $M$. The CR manifold $M$ is called the {\it CR structure at infinity} of $g$. Matsumoto \cite{Ma1, Ma2}
 proved that for any partially integrable CR manifold $M$, there exists an ACH metric $g$ on  $M\times [0, \infty)_\r$ which satisfies 
\begin{align*}
E_{IJ}:=\Ric_{IJ}+\frac{n+2}{2}g_{IJ}=O(\r^{2n+2}), \\
\scal=-(n+1)(n+2)+O(\r^{2n+3}),
\end{align*}
where $\Ric$ is the Ricci tensor and $\scal$ is the scalar curvature. Up to the pull-back by a $\Theta$-diffeomorphism which fixes $M$, such a metric is unique modulo 
tensors which have $O(\r^{2n+2})$ coefficients and $O(\r^{2n+3})$ trace in the frame 
$\{\mbox{\boldmath $Z$}_I \}$. The order $O(\r^{2n+2})$ in the above equation is optimal in general since $(\r^{-2n-2}E_{\a\b})|_M$ is independent of the choice of a solution $g$ and defines a CR invariant tensor $\mathcal{O}_{\a\b}\in\calE_{\a\b}(-n, -n)$, called the {\it CR obstruction tensor}. Matsumoto \cite{Ma3} generalized the CR GJMS operators $P_{2k}$ to the partially integrable case by using Dirichlet-to-Neumann type operator for the eigenvalue equations of the Laplacian of $g$, but the order is again restricted to $1\le k\le n+1$ due to the presence of the obstruction.  

If we confine ourselves to the case where $M$ is an integrable CR manifold, there is a possibility to refine the construction of ACH metrics. In fact, the CR obstruction tensor vanishes for integrable CR manifolds, in particular for three dimensional CR manifolds since the CR structure is always integrable in this dimension. However, we need an additional normalization condition on the metric to ensure the uniqueness since the Einstein equation does not determine the 
$O(\r^{2n+2})$-term of the metric. A possible normalization is the K\"ahler condition; Fefferman \cite{F} constructed an approximate solution to the complex Monge--Amp\`ere equation on a strictly pseudoconvex domain $\Omega$ with boundary $M$ and defined a K\"ahler metric which satisfies $E_{IJ}=O(\r^{2n+4})$ as an ACH metric on the `square root' of $\Omega$. However, this construction also has an obstruction $\mathcal{O}\in\calE(-n-2,\, -n-2)$, called the {\it CR obstruction density}, and the metric is only determined modulo $O(\r^{2n+4})$. 

In this paper, we show that the self-dual equation $W^{-}=0$ works as a better normalization when $M$ is three dimensional. The anti self-dual part $W^{-}$ of the Weyl curvature is connected to the Ricci tensor by the Bianchi identity 
\begin{equation}\label{bianchi}
\nabla^I W^{-}_{IJKL}=C^{-}_{JKL},
\end{equation}
where $C^{-}_{IJK}$ is the anti self-dual part of the Cotton tensor $C_{IJK}$, which is defined by $C_{IJK}:=\nabla_{K}P_{IJ}-\nabla_{J}P_{IK}$ with the Schouten tensor $P_{IJ}=\frac{1}{2} \Ric_{IJ}-\frac{1}{12} \scal\, g_{IJ}$.
It follows from \eqref{bianchi} that the equation $E_{IJ}=O(\r^4)$ implies $W^{-}_{IJKL}=O(\r^4)$, and it turns out that the further normalization $W^{-}_{IJKL}=O(\r^5)$ determines $g_{IJ}$ modulo $O(\r^5)$. We can then solve 
$E_{IJ}=O(\r^6)$, which implies $W^{-}_{IJKL}=O(\r^6)$. In the next step, besides the Einstein equation, we have freedom to prescribe the value of 
$$
\eta:=\bigl(\r^{-6}W^{-}_{\infty0\infty0}\bigr)\big|_M.
$$
If the Taylor coefficients of $g_{IJ}$ along $M$ have universal expressions in terms of pseudo-hermitian structure, $\eta$ defines a CR invariant of weight $(-3, -3)$ (see Lemma \ref{eta}). Thus, we should prescribe $\eta$ to be a CR invariant in order to obtain a CR invariant normalization condition. It is known that a CR invariant in $\calE(-3, -3)$ on a three dimensional CR manifold is unique up to a constant multiple \cite{G}, so there is no choice but to set $\eta=\lambda\mathcal{O}$ with a constant $\lambda\in\R$. After this step, the Einstein equation determines $g_{IJ}$ to infinite order, and in the case $\lambda=0$ the self-duality follows automatically from \eqref{bianchi}. Thus our main theorem reads as follows:

\begin{thm}\label{main-thm}
Let $M$ be a three dimensional strictly pseudoconvex CR manifold, and let $\lambda\in\R$.
Then there exists an ACH metric $g^{\lambda}_{IJ}$ on $M\times[0, \infty)_\r$ which has $M$ as the CR structure at infinity and satisfies
\[
\Ric_{IJ}+\frac{3}{2}g^\lambda_{IJ}=O(\r^\infty), \quad
W^-_{IJKL}=O(\r^6), \quad
 \eta=\lambda\mathcal{O},
\]
where $\eta$ is the density defined by \eqref{def-eta}. The metric $g^\lambda_{IJ}$ is unique modulo $O(\r^\infty)$ up to the pull-back by a $\Theta$-diffeomorphism which fixes $M$. Moreover, $g^0_{IJ}$ satisfies $W^-_{IJKL}=O(\r^\infty)$.
\end{thm}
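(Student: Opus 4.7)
The plan is to construct $g^\lambda$ by a formal Taylor expansion in $\r$, starting from Matsumoto's Einstein ACH metric and resolving the indicial freedom that appears at the two critical orders $\r^4$ and $\r^6$ by means of, respectively, the self-dual normalization $W^-=O(\r^5)$ and the prescription $\eta=\lambda\mathcal{O}$. The key structural input is the Bianchi identity \eqref{bianchi}: once the Einstein condition $E_{IJ}=O(\r^k)$ holds, the Cotton source $C^-_{JKL}$ vanishes to the same order, so $W^-$ is controlled by $E$ up to one order of divergence and the self-dual normalization needs to be imposed only at the first critical order.

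First I would apply Matsumoto's theorem for $n=1$ to obtain an initial ACH metric with $E_{IJ}=O(\r^4)$ and $\scal=-6+O(\r^5)$, uniquely determined modulo tensors with $O(\r^4)$ coefficients and $O(\r^5)$ trace. In dimension three the CR obstruction tensor $\mathcal{O}_{\a\b}$ vanishes identically, which is what permits the induction to continue past order $\r^{2n+2}$. Next I would carry out the standard indicial analysis of $E_{IJ}=0$ in the ACH normal form recalled in the introduction: writing the correction at the $k$-th step as $\r^k\phi_{IJ}$, the resulting linear equation on $\phi_{IJ}$ is algebraic with an indicial operator that is invertible except at $k=4$ and $k=6$, so at every non-critical order $\phi_{IJ}$ is uniquely determined by lower-order data.

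At the critical order $k=4$ one has $W^-=O(\r^4)$ by \eqref{bianchi}, and the linear map from the undetermined modes of $\phi_{IJ}$ to the leading coefficient of $W^-$ can be shown to be an isomorphism; hence the normalization $W^-_{IJKL}=O(\r^5)$ pins down $g$ modulo $O(\r^5)$. With this choice made, $E_{IJ}=O(\r^6)$ is solvable at the regular order $\r^5$, after which Bianchi upgrades $W^-=O(\r^5)$ to $W^-=O(\r^6)$. At the second critical order $k=6$, the indicial kernel is one-dimensional and parametrized by a scalar CR density of weight $(-3,-3)$; by the lemma cited in the introduction this kernel is faithfully detected by $\eta=(\r^{-6}W^-_{\infty 0\infty 0})|_M$, so setting $\eta=\lambda\mathcal{O}$ uniquely determines the $\r^6$-coefficient. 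Beyond this step the indicial operator is again invertible and the Einstein equation can be solved to infinite order inductively, giving existence and uniqueness modulo $O(\r^\infty)$ up to pull-back by a $\Theta$-diffeomorphism fixing $M$.

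For the self-duality of $g^0$, I would argue inductively: suppose $E_{IJ}=O(\r^\infty)$ and $W^-_{IJKL}=O(\r^k)$ for some $k\ge 6$. Then $C^-_{JKL}=O(\r^\infty)$, so \eqref{bianchi} gives $\nabla^I W^-_{IJKL}=O(\r^\infty)$, which in the frame $\{\mbox{\boldmath $Z$}_I\}$ amounts to a first-order ODE in $\r$ for the $\r^k$-coefficient of $W^-$ with vanishing source; the initial condition for the single scalar mode not killed algebraically is exactly $\eta=0$, so the ODE forces $W^-_{IJKL}=O(\r^{k+1})$. The main obstacle will be the explicit indicial analysis at $\r^4$ and $\r^6$---in particular the identification of the residual modes of $\phi_{IJ}$ with the leading coefficient of $W^-$ at $\r^4$ and the verification that $\eta$ captures the unique undetermined scalar mode at $\r^6$---both of which require a careful bookkeeping of the connection and curvature coefficients of the ACH metric in the boundary frame $\{\mbox{\boldmath $Z$}_I\}$.
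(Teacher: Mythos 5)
Your overall strategy coincides with the paper's: perturb order by order, read off the indicial operators from the linearized Einstein tensor, use the Bianchi identity to control $W^-$ by $E_{IJ}$, normalize the kernel at $\r^4$ by $W^-_{\infty1\infty1}=O(\r^5)$ and at $\r^6$ by $\eta=\lambda\mathcal{O}$, and get self-duality of $g^0$ from the first-order ODEs $\nabla^IW^-_{IJKL}=C^-_{JKL}=O(\r^\infty)$. However, there are two concrete gaps. First, your indicial analysis is inaccurate: the critical orders are not only $k=4$ and $k=6$. For the pair $(\psi_{00},\psi_{1\ol1})$ the determinant of the linearized system coming from $(E_{00},E_{1\ol1})$ is $\tfrac{1}{64}m(m+2)(m-6)(m-8)$, so it degenerates again at $m=8$, and the linearization of $E_{01}$ is $-\tfrac18(m+1)(m-5)\psi_{01}$, which degenerates at $m=5$. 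Your assertion that ``beyond this step the indicial operator is again invertible'' therefore fails at $m=8$; one must switch to the subsystem $(E_{\infty\infty},E_{00})$ there (which is nondegenerate) and recover $E_{1\ol1}=O(\r^9)$ from the contracted Bianchi identity \eqref{Bianchi1}, while at $m=5$ one determines $\psi_{01}$ from $E_{\infty1}$ and recovers $E_{01}$ from \eqref{Bianchi3}. These are fixable with the same toolkit, but as stated your induction would stall at $m=8$.

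Second, and more seriously, you never verify consistency of the overdetermined system at the critical orders. At $m=6$ you determine $\psi_{00},\psi_{1\ol1}$ from $E_{\infty\infty}=O(\r^7)$ and $\eta=\lambda\mathcal{O}$, but you must then \emph{prove} that the discarded equations $E_{00},E_{1\ol1},E_{\infty0}=O(\r^7)$ hold; they are not implied by the contracted Bianchi identities alone, since \eqref{Bianchi1} only gives one relation among $E_{\infty\infty},E_{00},E_{1\ol1}$ and \eqref{Bianchi2} degenerates precisely at $m=6$. The paper closes this by computing the anti-self-dual Cotton components in terms of $E_{IJ}$ (identities \eqref{C1}--\eqref{C3}) and combining them with the divergence formulas \eqref{div-W}: since $W^-=O(\r^6)$, one gets $C^-_{0\infty0}=\tfrac14(6-6)W^-_{\infty0\infty0}+O(\r^7)=O(\r^7)$ and $C^-_{\infty\infty0}=O(\r^7)$, which together with \eqref{C2}, \eqref{C3} and \eqref{Bianchi1} force $E_{00},E_{1\ol1},E_{\infty0}=O(\r^7)$. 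A similar argument (or the vanishing of the CR obstruction tensor in dimension three, which you do invoke) is needed to get $E_{11}=O(\r^5)$ at $m=4$ before the $W^-$-normalization can be imposed. Without this consistency step the construction at the critical orders is not justified.
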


The Taylor coefficients of $g^\lambda_{IJ}$ along the boundary have universal expressions in terms of the pseudo-hermitian structure for a fixed contact form.

By applying the construction of the CR GJMS operators via the ACH metric \cite{Ma3}, we obtain the following theorem, which is a special case of Theorem \ref{GJMS-three}:

\begin{thm}\label{GJMS}
Let $M$ be a three dimensional strictly pseudoconvex CR manifold, and let $\lambda\in\R$. Then, there exists a CR invariant linear differential operator
$$
P^\lambda_{2k}: \calE(k/2-1,\ k/2-1)\longrightarrow 
\calE(-k/2-1,\ -k/2-1)
$$
which is a polynomial in $\lambda$ of degree $\le k/3$, and has the principal part $\Delta_b^k$.  
\end{thm}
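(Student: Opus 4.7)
The plan is to apply Matsumoto's construction of CR GJMS operators via ACH Einstein metrics \cite{Ma3}, with the metric $g^\lambda$ of Theorem \ref{main-thm} in place of a finite-order ACH Einstein metric. The restriction $1\le k\le n+1$ in \cite{Ma3} comes entirely from the fact that Matsumoto's ACH metric is uniquely determined only to order $\r^{2n+2}$; since $g^\lambda$ is Einstein to infinite order, this restriction disappears and one obtains an operator for every $k\ge 1$.

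In more detail, given $f\in\calE(k/2-1,k/2-1)$, first extend $f$ to a function $\wt f$ near $M$ using the fixed contact form, and then seek a formal asymptotic solution of the eigenvalue equation $(\Delta_{g^\lambda}-\mu_k)u=0$ for the appropriate constant $\mu_k$ determined by the weight. Standard ACH indicial analysis gives a formal solution of the form
\[
u\sim\wt f\,\bigl(1+a_1\r^2+\cdots+a_{k-1}\r^{2k-2}\bigr)+\r^{2k}(\log\r)\,v+\cdots,
\]
in which the $a_j$ are determined by algebraic recursions from $g^\lambda$ and $\wt f$, and the first obstruction to a smooth extension appears as the log coefficient $v|_M$. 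One then defines $P^\lambda_{2k}f:=c_k\,v|_M$ for an appropriate normalization constant. The principal part $\Delta_b^k$ is extracted from the leading-order form of $\Delta_{g^\lambda}$ in the frame $\{\mbox{\boldmath $Z$}_I\}$, and CR invariance is immediate from the $\Theta$-diffeomorphism uniqueness in Theorem \ref{main-thm}, since the log coefficient depends on $g^\lambda$ only through its equivalence class modulo such diffeomorphisms.

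The main obstacle is the polynomial degree bound $\deg_\lambda P^\lambda_{2k}\le k/3$. By construction, $\lambda$ first enters the Taylor expansion of $g^\lambda_{IJ}$ at order $\r^6$ via $\eta=\lambda\mathcal{O}$, and higher powers of $\lambda$ can only arise through nonlinear combinations in the subsequent Einstein recursion. The key inductive claim is
\[
\deg_\lambda\bigl(g^\lambda_{IJ}\big|_{\r^{2j}}\bigr)\le j/3\qquad\text{for all }j\ge 0,
\]
which one establishes by induction on $j$: the Einstein recursion expresses the order-$\r^{2j}$ coefficient as a polynomial in lower-order coefficients, and any monomial contributing to this polynomial is a product $g^{(a_1)}\cdots g^{(a_m)}$ with $a_1+\cdots+a_m\le j$, whence the combined $\lambda$-degree is at most $(a_1+\cdots+a_m)/3\le j/3$. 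As the log coefficient $v|_M$ only involves the Taylor expansion of $g^\lambda$ through order $\r^{2k}$, the bound on $P^\lambda_{2k}$ follows.
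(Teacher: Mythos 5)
Your proposal follows essentially the same route as the paper: apply Matsumoto's log-coefficient (Dirichlet-to-Neumann) construction of $P_{2k}$ to the metric $g^\lambda$, which now works for every $k\ge 1$ because $g^\lambda$ is Einstein to infinite order, and obtain the degree bound from the fact that the $\r^{j}$-coefficient of $g^\lambda_{IJ}$ has $\lambda$-degree at most $j/6$ (the paper's Proposition \ref{exp}), established by the same weighted-homogeneity induction on the Einstein recursion starting from the order-$\r^6$ entry of $\lambda$ through $\eta=\lambda\mathcal{O}$. Apart from cosmetic differences (you write the expansion in even powers of $\r$, which is justified by evenness of $g^\lambda$), your argument, including the final step that the log coefficient depends on the metric only through a weight-$2k$ homogeneous expression in its Taylor coefficients, matches the paper's proof.
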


Let us mention a similar construction in conformal geometry. Fefferman--Graham \cite{FG2} constructed a formal solution to the self-dual Einstein equation for the Poincar\'e metric with a given three dimensional conformal manifold $\mathcal{M}$ as its conformal infinity. Thus our result is a CR analogue of their construction. When $\mathcal{M}$ is real analytic, LeBrun \cite{LeB} showed by twistor methods that there exists a real analytic self-dual Einstein metric on $\mathcal{M}\times(0, \e)$ with the conformal infinity $\mathcal{M}$. The metric of Fefferman--Graham gives the Taylor expansion of LeBrun's metric. In CR case, Biquard \cite{B} showed the existence of a self-dual Einstein ACH metric with a given real analytic CR three-manifold as its infinity by using twistor methods.
Thus our formal solution $g^0_{IJ}$ gives the Taylor expansion of Biquard's metric. In this paper, we 
prove the convergence of $g^\lambda_{IJ}$ by applying  the results of Baouendi--Goulaouic \cite{BG} on singular nonlinear Cauchy problems.

\begin{thm}\label{convergence}
Suppose  $M$ is a real analytic strictly pseudoconvex CR manifold of dimension three. Then the formal solution  $g^\lambda_{IJ}$ in Theorem \ref{main-thm} converges to a real analytic ACH metric near $M$. 
\end{thm}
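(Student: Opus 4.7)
The approach is to transform the Einstein equation for $g^\lambda_{IJ}$, in a suitable gauge, into a singular quasilinear Cauchy problem of the Fuchsian type handled by Baouendi--Goulaouic \cite{BG}, and then to use Theorem \ref{main-thm} to furnish a real analytic background that absorbs the right-hand side to arbitrary order. This parallels the convergence arguments for the Fefferman--Graham Poincar\'e metric and for Biquard's analytic self-dual Einstein ACH metric.

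First I would truncate the formal expansion at a large order $N$ to obtain a real analytic background $g^{(N)}$; this is possible because the Taylor coefficients of $g^\lambda_{IJ}$ are universal polynomials in the pseudohermitian data, which are analytic by hypothesis on $M$. Next I would impose harmonic (Bianchi) gauge relative to $g^{(N)}$, converting the Einstein equation into a quasilinear system whose principal part is the tensor Laplacian $\Delta_g$ acting on the components in the frame $\{\boldsymbol{Z}_I\}$:
\[
-\tfrac{1}{2}\Delta_g g_{IJ} + Q_{IJ}(g,\pa g) + \tfrac{3}{2}g_{IJ} = 0,
\]
with $Q$ quadratic in the first derivatives. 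Writing $g = g^{(N)}+\r^{N+1}u$ and using that $g^{(N)}$ satisfies the gauged Einstein equation modulo $O(\r^\infty)$ yields a system for $u$ of the Fuchsian form
\[
\bigl[(\r\pa_\r)^2 + A(x)(\r\pa_\r) + B(x)\bigr]u = \r\, F\bigl(\r,x,u,\r\pa_\r u,\pa_x u,\r\pa_x u\bigr),
\]
with analytic matrix coefficients $A,B$ determined by the indicial operator of $\Delta_g$, and analytic inhomogeneity $F$ that vanishes to high order at $\r=0$. Provided the shift $N+1$ avoids the spectrum of the indicial polynomial $\mu^2+A(x)\mu+B(x)$ on each tensor component, the theorem of Baouendi--Goulaouic yields a unique analytic solution $u$ vanishing at $\r=0$ on a neighborhood of $M$ in $M\times[0,\infty)_\r$. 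The resulting analytic metric agrees with $g^\lambda_{IJ}$ to order $N$; by the uniqueness part of Theorem \ref{main-thm} it agrees to infinite order, and the normalizations $W^-_{IJKL}=O(\r^6)$ and $\eta=\lambda\mathcal{O}$ are inherited automatically.

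The main obstacle is the combination of gauge-fixing and indicial bookkeeping. One must verify that the formal solution can be placed in harmonic gauge relative to $g^{(N)}$ by pull-back through a formal $\Theta$-diffeomorphism fixing $M$, using the uniqueness clause of Theorem \ref{main-thm} modulo $\Theta$-diffeomorphisms; compute the indicial operator of the gauged Einstein system on each metric component $g_{\infty\infty},g_{\infty 0},g_{\infty\a},g_{00},g_{0\a},g_{\a\b},g_{\a\ol\b}$, which involves the precise asymptotic form of $\Delta_g$ in the $\boldsymbol{Z}_I$-frame; and confirm that every exponent $N+1$ for $N$ sufficiently large is a regular point of the indicial equation, uniformly on a neighborhood of $M$. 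Once this Fuchsian structure is in place the convergence follows from a direct application of \cite{BG}.
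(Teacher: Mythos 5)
Your overall engine---reduce to a Fuchsian system in $\r\pa_\r$ and invoke Baouendi--Goulaouic---is exactly the paper's, but the route you take to the Fuchsian system is genuinely different, and as written it has a gap at the identification step. The paper never leaves the normal-form gauge: because $g^\lambda$ already satisfies $g_{\infty\infty}=4$, $g_{\infty0}=g_{\infty1}=0$, the only unknowns are the four functions $\varphi_{00},\varphi_{1\ol1},\varphi_{01},\varphi_{11}$, and Lemma \ref{Einstein-tensor} already exhibits the four \emph{tangential} components $E_{00},E_{1\ol1},E_{01},E_{11}$, modulo negligible terms, as a second-order operator in $\r\pa_\r$ acting on precisely these unknowns. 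Writing $\wt g^\lambda_{IJ}=\sum_{k=0}^{8}\r^k g^{(k)}_{IJ}+\r^9\wt\varphi_{IJ}$ and imposing $\r^{-9}(E_{00},E_{1\ol1},E_{01},E_{11})=0$ gives a determined $4\times4$ system of the form \eqref{cauchy} with $m=2$ whose indicial determinant is computed explicitly to be $(k+1)(k+3)(k+4)(k+5)(k+9)^2(k+10)(k+11)\neq0$; the remaining Einstein equations play no role in the convergence argument, and no gauge-fixing, no DeTurck term, and no recovery of the Einstein equation from a gauged equation are needed. Convergence then follows because the Fuchsian system has a unique formal power series solution as well as a unique holomorphic one, and the formal metric supplies the former.

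The concrete gap in your version is the sentence ``the resulting analytic metric agrees with $g^\lambda_{IJ}$ to order $N$; by the uniqueness part of Theorem \ref{main-thm} it agrees to infinite order.'' The uniqueness clause of Theorem \ref{main-thm} applies to metrics satisfying $\Ric_{IJ}+\frac{3}{2}g_{IJ}=O(\r^\infty)$ together with the $W^-$ and $\eta$ normalizations, up to $\Theta$-diffeomorphism; your analytic solution of the gauged equation is, a priori, neither Einstein (it satisfies $\Ric+\frac32 g=\delta^*V$ for an unknown gauge one-form $V$) nor in normal form, so that clause cannot be invoked. What you actually need is either (i) the standard Bianchi argument that $V$ solves a homogeneous linear Fuchsian system with $V=O(\r^{N+1})$ and hence vanishes, or (ii) formal uniqueness of the gauged Fuchsian system itself, so that its analytic solution has the (gauged) formal Einstein metric as Taylor series. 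Either can be carried out, but both require the indicial computation you defer, now for all components of the gauged system rather than four, plus the verification that after gauge-fixing the $T$-derivatives and mixed $\pa_\r\pa_x$ terms still carry the extra powers of $\r$ demanded by the hypotheses of \cite{BG} (in normal form this is immediate from \eqref{nabla-bar} and Lemma \ref{Einstein-tensor}). You would also need to construct the formal $\Theta$-diffeomorphism putting $g^\lambda$ into harmonic gauge and undo it analytically at the end. None of these steps is fatal, but each is real work that the paper's normal-form reduction bypasses.
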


This paper is organized as follows. In \S2, we review pseudo-hermitian geometry on a CR manifold and 
basic notions on ACH metric. By following Matsumoto \cite{Ma1}, we describe the Levi-Civita connection of an ACH metric in terms of the extended Tanaka--Webster connection. In \S3, we clarify the relationship between the Einstein equation and the self-dual equation, and compute the variation of curvature quantities under a perturbation of the metric. \S4 is devoted to the proof of Theorem \ref{main-thm}; we construct a one parameter family of formal solutions to the Einstein equation and examine the dependence on the parameter. Then, in \S5 we use these metrics to construct the CR GJMS operators and prove Theorem \ref{GJMS}. Finally, in \S6 we show the convergence of the formal solutions in case $M$ is a real analytic 
CR manifold.
\ \\

\noindent{\bf Acknowledgements}.
The author is grateful to Kengo Hirachi and Yoshihiko Matsumoto for invaluable comments on the results. 
He also thanks Matsumoto for checking the author's suggestion of some minor corrections to the computation in his thesis \cite{Ma1}.

\section{CR structure and ACH metric}
\subsection{Pseudo-hermitian geometry}
Let $M$ be a $(2n+1)$-dimensional $C^\infty$ manifold.  A pair $(H, J)$ is called a {\it CR structure} 
on $M$ if $H$ is a rank $2n$ subbundle of $TM$ and $J$ is an almost complex structure on $H$ which satisfies the (formal) integrability condition
$$
[\Gamma(T^{1,0}M), \Gamma(T^{1,0}M)]\subset\Gamma(T^{1,0}M),
$$ 
where $T^{1,0}M\subset\mathbb{C}H$ is the eigenspace of $J$ with the eigenvalue $i$. We note that the integrability condition automatically holds when $M$ is three dimensional. For any real 1-form $\th$ such that ${\rm Ker}\, \th=H$, we define the {Levi form} $h_{\th}$ by
$$
h_\th(Z, \overline W)=-id\th(Z, \overline W)
$$
for $Z, W\in T^{1,0}M$. We say the CR structure is {\it strictly pseudoconvex} if $h_\th$ is positive definite for some $\th$. Since $h_{f\th}=fh_\th$ for any function $f$, such $\th$ is determined up to a multiple by a positive function. When $M$ is strictly pseudoconvex, $H$ defines a contact structure, so we call $\th$ a contact form. The {\it Reeb vector field} is the real vector field $T$ uniquely determined by the conditions
$$
\th(T)=1, \quad T\lrcorner\, d\th=0.
$$ 
Let $\{Z_\a\}$ be a local frame for $T^{1, 0}M$. If we put $Z_{\overline \a}:=\overline{Z_\a}$, then $\{T, Z_\a, Z_{\overline \a}\}$ gives a local frame for $\mathbb{C}TM$, which we call an {\it admissible frame}. The dual coframe $\{\th, \th^\a, \th^{\overline \a}\}$ is called an {\it admissible coframe} and satisfies
$$
d\th=ih_{\a\overline\b}\th^\a\wedge\th^{\overline \b},
$$
where $h_{\a\overline\b}=h_\th(Z_\a, Z_{\overline\b})$.

The {\it CR canonical bundle} is defined by $K_M:=\wedge^{n+1}(T^{0,1}M)^\perp\subset\wedge^{n+1}\mathbb{C}T^\ast M$, where $T^{0,1}M:=\overline{T^{1,0}M}$. When $K_M^{-1}$ admits an 
$(n+2)$-nd root $\calE(1,0)$, the {\it CR density bundle} is defined by
\begin{equation}\label{CR-density}
\calE(w, w^\prime)=\calE(1,0)^{\otimes w}\otimes\overline{\calE(1,0)}{}^{\otimes w^\prime}
\end{equation}
for each $(w, w^\prime)\in\mathbb{C}^2$ with $w-w^\prime\in\mathbb{Z}$. In this paper, we restrict ourselves to the cases $w=w^\prime$. In these cases, the definition \eqref{CR-density} is independent of the choice of $\calE(1,0)$ so we can define $\calE(w, w)$ without assuming the global existence of $\calE(1,0)$.  We also denote the space of sections of these bundles by the same symbols, and call them {\it CR densities}.

For any contact form $\th$, there exists a local nonvanishing section $\zeta$ of $K_M$, unique up to a multiple of a $U(1)$-valued function, which satisfies
$$ 
\th\wedge(d\th)^n=i^{n^2}n!\th\wedge(T\lrcorner\,\zeta)\wedge(T\lrcorner\,\overline\zeta).
$$
Then, the weighted contact form $\bth:=\th\otimes |\zeta|^{-2/(n+2)}\in\Gamma(T^\ast M\otimes\calE(1,1))$ is defined globally and  independent of the choice of $\th$. Thus, there is a one to one correspondence between the set of  contact forms and the set of positive sections $\tau\in\calE(1,1)$, called {\it CR scales}. We define the CR invariant weighted Levi form $\bh_{\a\overline\b}:=\tau h_{\a\overline\b}$ by putting a weight to $h_\th$ with the CR scale $\tau$ corresponding to $\th$. We raise and lower the indices of tensors on $\mathbb{C}H$ by $\bh_{\a\overline\b}$ and its inverse $\bh^{\a\overline\b}$, which has weight $(-1, -1)$.

For a fixed contact form $\th$, we can define a canonical linear connection $\nabla$ on $TM$, called the {\it Tanaka--Webster connection}. It preserves $T^{1,0}M$ and satisfies $\nabla T=0$, $\nabla h_\th=0$. In an admissible frame 
$\{T, Z_\a, Z_{\overline \a}\}$, the connection 1-forms $\omega_\b{}^\a$ satisfy the structure equation
$$
d\th^\a=\th^\b\wedge\omega_\b{}^\a+A^\a{}_{\overline \b}\,\bth\wedge\th^{\overline \b}.
$$ 
The tensor $A_{\a\b}:=\overline{A_{\overline\a\overline\b}}$ satisfies $A_{\a\b}=A_{\b\a}$ and is called the {\it Tanaka--Webster torsion tensor}. 
We use the index 0 for the direction of $T$, and we denote the components of covariant derivatives of a tensor by indices preceded by a comma, e.g., $A_{\a\g, \overline\b}=\nabla_{\overline\b} A_{\a\g}$. We omit the comma for covariant derivatives of a function. 
The curvature form $\Omega_{\a}{}^\b=d\omega_{\a}{}^\b-\omega_{\a}{}^\g\wedge\omega_\g{}^\b$ 
is given by
\begin{equation}\label{curvature-TW}
\begin{aligned}
\Omega_\a{}^\b&=R_\a{}^\b{}_{\g\overline\mu} \th^\g\wedge\th^{\overline \mu}+A_{\a\g,}{}^\b\th^\g\wedge\bth-A^\b{}_{\overline \g,\a}\th^{\overline \g}\wedge\bth \\
& \quad -iA_{\a\g}\th^\g\wedge\th^\b+i\bh_{\a\overline\g}A^\b{}_{\overline\mu}
\th^{\overline\g}\wedge\th^{\overline\mu}.
\end{aligned}
\end{equation}
The tensor $R_\a{}^\b{}_{\g\overline\mu}$ is called the {\it Tanaka--Webster curvature tensor}. Taking traces with the weighted Levi form, we define the Tanaka--Webster Ricici tensor $\Ric_{\a\overline\b}:=
R_\g{}^\g{}_{\a\overline\b}$ and the Tanaka--Webster scalar curvature $\scal:=\Ric_\a{}^\a$.
The {\it sublaplacian} is the differential operator $\Delta_b: \calE(w, w^\prime)\rightarrow
\calE(w-1, w^\prime-1)$ defined by 
$$
\Delta_b f=-\bh^{\a\overline\b}(\nabla_\a\nabla_{\overline\b}+\nabla_{\overline\b}\nabla_\a)f.
$$
If we rescale the contact form as $\wh\th=e^\U\th$, the Tanaka--Webster connection and its  curvature quantities satisfy transformation formulas involving the derivatives of the scaling factor $\U$; see e.g.,  \cite{L}.  We note that in dimension three the rank of $T^{1,0}M$ is 1 and the curvature form \eqref{curvature-TW} is reduced to 
$$
\Omega_1{}^1=\scal\,\bh_{1\overline1}\th^1\wedge\th^{\overline1}+A_{11,}{}^1\th^1\wedge\bth-
A^1{}_{\overline1, 1}\th^{\overline1}\wedge\bth.
$$
Also, in this dimension, $M$ is locally CR diffeomorphic to the standard sphere $S^3$ if and only if the {\it Cartan tensor}
$$
Q_{11}:=\frac{1}{6}\scal_{11}+\frac{i}{2}\scal \,A_{11}-A_{11,0}-\frac{2i}{3}A_{11, \overline1}{}^{\overline1}
$$
vanishes identically. The Cartan tensor is a CR invariant tensor of weight $(-1, -1)$. We also have a CR invariant density defined by 
\begin{equation}\label{def-O}
\mathcal{O}:=(\nabla^1\nabla^1-iA^{11})\,Q_{11}\in\calE(-3, -3),
\end{equation}
called the {\it obstruction density}. It follows from the Bianchi identity for the Cartan tensor that 
$\mathcal{O}$ is a real density \cite{CL}. There is also a CR invariant density, called the  obstruction density, on higher dimensional CR manifolds and it appears as the logarithmic coefficient in the asymptotic expansion of the solution to the complex Monge--Amp\`ere equation on strictly pseudoconvex domain \cite{LM}. In dimension three, a CR invariant of weight $(-3, -3)$ is unique up to a constant multiple \cite{G}, so it is necessarily a multiple of $\mathcal{O}$.

\subsection{ACH metrics}
The ACH metric was introduced by Epstein--Melrose--Mendoza \cite{EMM} as a generalization of the complex hyperbolic metric on the ball. In this paper, we define it by using the characterization via
 the normal form.

Let $X$ be the interior of a $(2n+2)$-dimensional $C^\infty$ manifold whose boundary $M$ is  equipped with a strictly pseudoconvex CR structure $(H, J)$. A conformal class $[\Theta]$ in 
$\Gamma(M, T^\ast\overline{X})$ is called a $\Theta$-{\it structure} if $\Theta|_{TM}$ gives a contact form on $M$ for each $\Theta\in[\Theta]$. We call $(\overline X, [\Theta])$ a 
$\Theta$-{\it manifold}. Let $(\overline{X}^\prime, [\Theta^\prime])$ be another $\Theta$-manifold with the same boundary $M$. Then, a diffeomorphism $\Phi$ from a neighborhood of $M$ in $\overline X$ to a neighborhood of $M$ in $\overline{X}^\prime$ is called a $\Theta$-{\it diffeomorphism} if it fixes $M$ and 
satisfies $[\Phi^\ast\Theta^\prime]=[\Theta]$. We take a boundary defining function $\r\in C^\infty(\overline X)$ which is positive on $X$. A vector field $V$ on $\overline X$ is called a 
$\Theta$-{\it vector field} if it satisfies 
$$
V|_M=0, \quad \wt\Theta(V)=O(\r^2),
$$ 
where $\wt\Theta$ is an arbitrary extension of a $\Theta\in[\Theta]$. Note that the definition is independent of the choice of  $\Theta$ and $\wt\Theta$. We extend $\{d\r, \wt\Theta\}$ to a local coframe $\{d\r, \wt\Theta, \a^1, \dots, \a^{2n}\}$ for $T^\ast\overline X$ near $M$. Let 
$\{N, T, Y_1, \dots, Y_{2n}\}$ be the dual frame. Then, any $\Theta$-vector field $V$ can be written as
$$
V=V^{\infty}(\r N)+V^0 (\r^2T)+V^i (\r Y_i), \quad V^\infty, V^0, V^i\in C^\infty(\overline X).
$$
If we take another local coframe $\{d\r^\prime, \wt\Theta^\prime, \a^\prime{}^i\}$ and its dual $\{N^\prime, T^\prime, Y^\prime_i\}$, then the transition function between $\{\r N, \r^2T, \r Y_i\}$ and $\{\r^\prime N^\prime, \r^\prime{}^2T^\prime, \r^\prime Y^\prime_i\}$ is smooth and nondegenerate up to $M$, so there exists a vector bundle $^\Theta T\overline X$ over $\overline X$ for which 
$\{\r N, \r^2T, \r Y_i\}$ gives a local frame. A $\Theta$-vector field is identified with a section of this bundle and we call $^\Theta T\overline X$ the $\Theta$-{\it tangent bundle}. A fiber metric on $^\Theta T\overline X$ is called a $\Theta$-{\it metric}. Since the restriction $^\Theta T\overline X|_X$ is canonically isomorphic to $TX$, a $\Theta$-metric defines a Riemannian metric on $X$.  A local frame $\{\mbox{\boldmath $Z$}_I\}$ of $^\Theta T\overline X$ is called a $\Theta$-{\it frame}.  We also consider the dual $^\Theta T^\ast \overline X$ of the $\Theta$-tangent bundle and various tensor bundles, whose sections are called $\Theta$-{\it tensors}. A $\Theta$-tensor is said to be $O(\r^m)$ if each component in a $\Theta$-frame is $O(\r^m)$.
$\Theta$-vector fields are closed under the Lie bracket, and those which vanish at a fixed point $p\in M$ form an ideal. Thus the fiber $^\Theta T_p\overline X$ becomes a Lie algebra, which we call the {\it tangent algebra}.  

The product $M\times [0, \infty)_\r$ has a canonical $\Theta$-structure, called the 
{\it standard} $\Theta$-{\it structure}, which is defined by extending each contact form $\th$ on $M$ to $\Theta\in\Gamma(M, T^\ast \overline X)$ with $\Theta(\pa/\pa \r)=0$. Let $\th$ be a contact form and $\{T, Z_\a, Z_{\overline \a}\}$ an admissible frame for $\mathbb{C}TM$. We extend 
$\{T, Z_\a, Z_{\overline \a}\}$ to $M\times [0, \infty)_\r$ in the trivial way, and define a 
(complexified) $\Theta$-frame $\{\mbox{\boldmath $Z$}_I\}$ by
$$
\mbox{\boldmath $Z$}_\infty=\r\pa_\r, \quad \mbox{\boldmath $Z$}_0=\r^2 T, 
\quad \mbox{\boldmath $Z$}_\a=\r Z_\a, \quad \mbox{\boldmath $Z$}_{\overline \a}=\r Z_{\overline \a},
$$
where $\pa_\r=\pa/\pa \r$. A $\Theta$-metric $g$ on $M\times [0, \infty)_\r$ is called a 
{\it normal form ACH metric} if the components $g_{IJ}=g(\mbox{\boldmath $Z$}_I,
\mbox{\boldmath $Z$}_J)$ satisfy
\begin{equation}\label{normal}
\begin{aligned}
g_{\infty\infty}=4, \quad g_{\infty 0}=g_{\infty\a}&=0, \quad g_{00}=1+O(\r), \quad g_{0\a}=O(\r), \\ 
 g_{\a\b}=O(\r), &\quad g_{\a\overline{\b}}=h_{\a\overline{\b}}+O(\r),
\end{aligned}
\end{equation}
where $h_{\a\overline{\b}}=h_\th(Z_\a, Z_{\overline\b})$.
On a general $\Theta$-manifold $(\overline X, [\Theta])$, the ACH metric is defined as follows:

\begin{dfn}
A $\Theta$-metric $g$ on $\overline X$ is called an ACH metric if for any contact form $\th$ on $M$, there exist a neighborhood $U\subset\overline X$ of $M$ and a $\Theta$-diffeomorphism $\Phi_\th: M\times [0, \infty)_\r\rightarrow U$ such that $\Phi_\th^\ast g$
is a normal form ACH metric.
\end{dfn}

We remark that there is an alternative definition of the ACH metric which involves only boundary value of $g$; see \cite[Definition 4.6]{Ma1}. 

The germ of $\Phi_\th$ along $M$ is unique, and we call $\r\circ\Phi_\theta^{-1}$ the {\it model defining function} for $\th$. We identify a neighborhood of $M$ in $\overline X$ with $M\times[0, \e)_\r$ 
through $\Phi_\th$ and regard $\{\mbox{\boldmath $Z$}_I\}$ as a $\Theta$-frame on $\overline X$.
The following proposition will be used in the proof of Lemma \ref{eta}.

\begin{prop} \label{Z-M}
The boundary values $\mbox{\boldmath $Z$}_\infty|_M, \mbox{\boldmath $Z$}_0|_M$ are  independent of $\th$ and determined only by the ACH metric $g$.
\end{prop}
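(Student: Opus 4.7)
The plan is to give an intrinsic description of $\mbox{\boldmath $Z$}_\infty|_M$ and $\mbox{\boldmath $Z$}_0|_M$ at each $p\in M$, phrased purely in terms of the $\Theta$-metric $g$ and the Lie algebra structure on the tangent algebra $\mathfrak g := {}^\Theta T_p\overline X$. Since neither $g$ nor the bracket on $\mathfrak g$ depends on the contact form $\theta$, such a description immediately implies the proposition.

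First I would compute the commutators of the frame $\{\mbox{\boldmath $Z$}_I\}$ on $M\times [0,\infty)_\rho$ and restrict to $\rho=0$. Because $T,Z_\a,Z_{\overline\a}$ are lifted from $M$ and annihilate $\rho$, this is a short calculation using only $[Z_\a,Z_{\overline\b}]\equiv ih_{\a\overline\b}T$ modulo $\Gamma(\mathbb{C}H)$; the outcome is
\[
[\mbox{\boldmath $Z$}_\infty,\mbox{\boldmath $Z$}_0]|_M=2\mbox{\boldmath $Z$}_0|_M,\quad [\mbox{\boldmath $Z$}_\infty,\mbox{\boldmath $Z$}_\a]|_M=\mbox{\boldmath $Z$}_\a|_M,\quad [\mbox{\boldmath $Z$}_\a,\mbox{\boldmath $Z$}_{\overline\b}]|_M=ih_{\a\overline\b}(p)\,\mbox{\boldmath $Z$}_0|_M,
\]
while the remaining brackets among $\mbox{\boldmath $Z$}_0,\mbox{\boldmath $Z$}_\a,\mbox{\boldmath $Z$}_{\overline\a}$ vanish at $M$. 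Consequently the derived subalgebra $\mathfrak h := [\mathfrak g,\mathfrak g]$ is a $(2n+1)$-dimensional Heisenberg algebra with one-dimensional centre $Z(\mathfrak h)=\mathbb{R}\,\mbox{\boldmath $Z$}_0|_M$, and both $\mathfrak h$ and $Z(\mathfrak h)$ are intrinsically defined from $\mathfrak g$.

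Next I would use the normal form \eqref{normal} to pin $\mbox{\boldmath $Z$}_\infty|_M$ and $\mbox{\boldmath $Z$}_0|_M$ down metrically. Evaluated at $\rho=0$, it gives $g(\mbox{\boldmath $Z$}_\infty|_M,\mbox{\boldmath $Z$}_\infty|_M)=4$, $g(\mbox{\boldmath $Z$}_\infty|_M,\mathfrak h)=0$, $g(\mbox{\boldmath $Z$}_0|_M,\mbox{\boldmath $Z$}_0|_M)=1$, and shows $g|_{\mathfrak h}$ is nondegenerate (its boundary Gram matrix in the above basis is block-diagonal with blocks $(1)$ and the nondegenerate pairing $h_{\a\overline\b}$). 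Hence $\mathfrak h^\perp\subset\mathfrak g$ is a line and $\mbox{\boldmath $Z$}_\infty|_M$ is one of its two vectors of squared norm $4$; the sign is fixed by requiring $\mathrm{ad}(\mbox{\boldmath $Z$}_\infty|_M)$ to act on $\mathfrak h$ with positive eigenvalues. Likewise, $\mbox{\boldmath $Z$}_0|_M$ is the unique element of $Z(\mathfrak h)$ with $g$-squared norm $1$ whose eigenvalue under $\mathrm{ad}(\mbox{\boldmath $Z$}_\infty|_M)$ is positive (namely $+2$).

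The only slightly delicate step is the bracket computation: one must check that the identities above hold as genuine equalities in $\mathfrak g$ at $\rho=0$, independent of any jet data of the metric. This is guaranteed by the fact that the vector fields $T,Z_\a,Z_{\overline\a},\pa_\rho$ live on $M\times[0,\infty)_\rho$ before any metric is chosen, so their brackets involve no metric data. Granted this, every condition characterizing $\mbox{\boldmath $Z$}_\infty|_M$ and $\mbox{\boldmath $Z$}_0|_M$ is phrased in terms of $(\mathfrak g,g)$ alone, and these are attached to $(\overline X,[\Theta],g)$ without reference to $\theta$; the proposition follows.
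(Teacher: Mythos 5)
Your argument is essentially the paper's own proof: the paper also characterizes $\mbox{\boldmath $Z$}_0|_M$ as an oriented, unit-norm generator of a Lie-theoretically distinguished line in the tangent algebra (it uses the second derived algebra $\mathcal{D}^2=[\mathcal{D}^1,\mathcal{D}^1]$, which by your bracket computation coincides with the centre of the Heisenberg algebra $\mathcal{D}^1$), and $\mbox{\boldmath $Z$}_\infty|_M$ as the normalized generator of $(\mathcal{D}^1)^\perp$. One detail in your write-up does not work as stated: since $Z(\mathfrak h)$ is one-dimensional, $\mathrm{ad}(\mbox{\boldmath $Z$}_\infty|_M)$ acts on it by the scalar $2$, so \emph{both} unit vectors $\pm\mbox{\boldmath $Z$}_0|_M$ satisfy your eigenvalue condition and the sign of $\mbox{\boldmath $Z$}_0|_M$ is not fixed by it. The sign has to come from the co-orientation of the contact distribution (equivalently, positivity of the Levi form, which orients the Reeb direction and hence $Z(\mathfrak h)$ via the bracket $[\mbox{\boldmath $Z$}_1,\mbox{\boldmath $Z$}_{\ol1}]$); this is what the paper's word ``oriented'' is doing, and it is independent of $\th$ because all contact forms in the positive conformal class induce the same co-orientation. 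Your sign-fixing for $\mbox{\boldmath $Z$}_\infty|_M$ by positivity of the $\mathrm{ad}$-eigenvalues on $\mathfrak h$ is fine.
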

\begin{proof}
By strict pseudoconvexity of $(H, J)$, the derived Lie algebras of the tangent algebra $^\Theta T_p\overline X$ at a point $p\in M$ are given by  
\begin{align*}
\mathcal{D}^1&:=[^\Theta T_p\overline X,{}^\Theta T_p\overline X]
={\rm span}\{(\mbox{\boldmath $Z$}_0)_p, (\mbox{\boldmath $Z$}_\a)_p, 
(\mbox{\boldmath $Z$}_{\overline \a})_p\}, \\
\mathcal{D}^2&:=[\mathcal{D}^1, \mathcal{D}^1]={\rm span}\{(\mbox{\boldmath $Z$}_0)_p\}.
\end{align*}
Thus, $(\mbox{\boldmath $Z$}_\infty)_p$ and $(\mbox{\boldmath $Z$}_0)_p$ are oriented basis of $(\mathcal{D}^1)^\perp$ and $\mathcal{D}^2$ respectively. Since they are normalized by ${|(\mbox{\boldmath $Z$}_\infty)_p|}_g^2=4$, $|(\mbox{\boldmath $Z$}_0)_p|_g^2=1$, they are independent of $\th$.
\end{proof}

Let $\th, \wh\th=e^\U\th$ be contact forms on $M$ and $\r, \wh\r$ the corresponding model defining functions. Then there exists a positive function $f$ on $\overline X$ such that $\wh\r=f\r$. Since the Reeb vector fields are related as $\wh T=e^{-\U}(T-ih^{\a\overline\g}\U_{\overline \g}Z_\a+ih^{\g\overline\a}\U_\g Z_{\overline\a})$, we have
$$
\wh{\mbox{{\boldmath $Z$}}}_0=\wh{\r}^{\,2}\wh T=e^{-\U}f^2\mbox{\boldmath $Z$}_0+O(\r)
$$
as a $\Theta$-vector field, where we regard $\U$ as a function on a neighborhood of $M$.
It follows from $\wh{\mbox{\boldmath $Z$}}_0|_M=\mbox{\boldmath $Z$}_0|_M$ that $f|_M=e^{\U/2}$. Thus we have
\begin{equation}\label{rho}
\wh\r=e^{\U/2}\r+O(\r^2).
\end{equation}
In particular, a contact form is recovered from the 1-jet of the corresponding model defining function along the 
boundary.

\subsection{The Levi-Civita connection}
Let $g$ be an ACH metric on a $\Theta$-manifold $(\overline X, [\Theta])$ with boundary $M$. Here and after, we assume that $M$ is three dimensional. We lower and raise the indices of $\Theta$-tensors by $g_{IJ}$ and its inverse $g^{IJ}$. In order to describe the Levi-Civita connection of $g$, we introduce an extension of the Tanaka--Webster connection by following \cite{Ma1, Ma2}; we refer the reader to \cite[\S 6.2]{Ma1} or \cite[\S 4]{Ma2} for a more detailed exposition. 

Let $\th$ be a contact form on $M$. We identify a neighborhood of $M$ in $\overline X$ with $M\times [0, \e)_\r$ by the $\Theta$-diffeomorphism determined by $\th$. We take an admissible frame $\{T, Z_1, Z_{\overline 1}\}$ and define the {\it extended Tanaka--Webster connection} $\overline{\nabla}$ on $T\overline X$ by 
\begin{gather*}
\overline{\nabla}\pa_\r=0, \quad\overline{\nabla}_{\pa_\r}T=\overline{\nabla}_{\pa_\r}Z_1=0, \\
\overline{\nabla}_{T}Z_1=\nabla^{\rm TW}_T Z_1, \quad \overline{\nabla}_{Z_1}Z_1=\nabla^{\rm TW}_{Z_1}Z_1,
\quad \overline{\nabla}_{Z_{\ol1}}Z_1=\nabla^{\rm TW}_{Z_{\ol1}}Z_1,
\end{gather*}
where $\nabla^{\rm TW}$ denotes the Tanaka--Webster connection associated with $\th$. Then, $\overline{\nabla}$ is a $\Theta$-{\it connection} in the sense that if $V, W$ are $\Theta$-vector fields, so is 
the covariant derivative $\overline{\nabla}_V W$. We take the $\Theta$-frame $\{\mbox{\boldmath $Z$}_I\}=\{\r\pa_\r, \r^2T, \r Z_1, \r Z_{\overline1}\}$ and define the Christoffel symbols $\overline{\Gamma}_{IJ}{}^K$ by 
$\overline{\nabla}_{\scriptsize{\mbox{\boldmath $Z$}_I} }\mbox{\boldmath $Z$}_J= 
\overline{\Gamma}_{IJ}{}^K \mbox{\boldmath $Z$}_K$. A simple calculation shows that
\begin{equation}\label{Gamma-bar}
\begin{aligned}
&\overline{\Gamma}_{\infty\infty}{}^\infty=1,  &  &\overline{\Gamma}_{\infty0}{}^0=2, &  &\overline{\Gamma}_{\infty 1}{}^1=1, \\
&\overline{\Gamma}_{01}{}^1=\r^2\Gamma_{01}{}^1,&  &\overline{\Gamma}_{11}{}^1=\r\Gamma_{11}{}^1, & 
&\overline{\Gamma}_{\overline1 1}{}^1=\r\Gamma_{\overline1 1}{}^1,
\end{aligned}
\end{equation}
where $\Gamma_{ij}{}^k$ are the Christoffel symbols of $\nabla^{\rm TW}$ with respect to $\{T, Z_1, Z_{\overline1}\}$; the components which cannot be obtained by taking complex conjugates of \eqref{Gamma-bar} are 0. It follows from \eqref{Gamma-bar} that the components of the covariant derivative of a $\Theta$-tensor $S_{I_1\cdots I_p}{}^{J_1\cdots J_q}$ are computed as
\begin{equation}\label{nabla-bar}
\begin{aligned}
&\overline{\nabla}_\infty S_{I_1\cdots I_p}{}^{J_1\cdots J_q}=\bigl(\r\pa_\r-\#(I_1\cdots I_p)+\#(J_1\cdots J_q)\bigr)S_{I_1\cdots I_p}{}^{J_1\cdots J_q}, \\
&\overline{\nabla}_0 S_{I_1\cdots I_p}{}^{J_1\cdots J_q}=\r^2\nabla^{\rm TW}_0 S_{I_1\cdots I_p}{}^{J_1\cdots J_q}, 
\\
&\overline{\nabla}_1 S_{I_1\cdots I_p}{}^{J_1\cdots J_q}=\r\nabla^{\rm TW}_1 S_{I_1\cdots I_p}{}^{J_1\cdots J_q},
\end{aligned}
\end{equation}
where $\#(I_1\cdots I_p):=p+({\rm the\ number\ of\ 0}s)$ and we regard $S$ as a tensor on $\mathbb{C}H$ when we apply $\nabla^{\rm TW}$ to it (\cite[Lemma 6.2]{Ma1}, \cite[(4.9)]{Ma2}). 

The torsion tensor $\overline T_{IJ}{}^K$ and the curvature tensor $\overline R_I{}^J{}_{KL}$ of $\overline{\nabla}$ are defined by  
\begin{gather*}
(\overline{\nabla}_V W-\overline{\nabla}_W V-[V, W])^K=\overline T_{IJ}{}^K V^I W^J,\\
(\overline{\nabla}_V\overline{\nabla}_W Y-\overline{\nabla}_W\overline{\nabla}_V Y
-\overline{\nabla}_{[V, W]}Y)^J=\overline R_I{}^J{}_{KL}Y^I V^K W^L
\end{gather*}
 respectively. In the $\Theta$-frame $\{\mbox{\boldmath $Z$}_I\}$, the components are given by 
\begin{equation}\label{torsion}
\overline T_{1\overline1}{}^0=i h_{1\overline1}, \quad \overline T_{01}{}^{\overline1}=\r^2 A_1{}^{\overline1},
\end{equation}
and 
\begin{equation}\label{curvature}
\overline R_1{}^1{}_{1\overline1}=\r^2\scal^{\rm TW} h_{1\overline1}, \quad
\overline R_1{}^1{}_{01}=-\r^3A_{11,}{}^1, \quad
\overline R_1{}^1{}_{0\overline1}=\r^3 A^1{}_{\overline1,}{}_{1},
\end{equation}
where $\scal^{\rm TW}$ denotes the Tanaka--Webster scalar curvature, and we have removed the CR weights  in the Tanaka--Webster tensors by the CR scale corresponding to $\th$. The components which cannot be 
obtained from \eqref{torsion}, \eqref{curvature} by the symmetries of $\overline T$,
$\overline R$ or by taking the complex conjugates are all 0. The nonzero components of the Ricci tensor $\ol R_{IJ}=\ol R_I{}^K{}_{KJ}$ are given by 
$$
\ol R_{1\ol1}=\r^2\scal^{\rm TW} h_{1\ol1}, \quad \ol R_{10}=\r^2 A_{11,}{}^1.
$$

Let $\nabla$ be the Levi-Civita connection of $g$, which is also a $\Theta$-connection 
(\cite[Proposition 4.4]{Ma1}). We define the {\it difference} $\Theta$-{\it tensor} $D_{IJ}{}^K$ by
$$
\nabla_I V^K=\overline{\nabla}_I V^K+D_{IJ}{}^K V^J.
$$
Since $\nabla$ is torsion-free, we have 
\begin{equation}\label{comm-D}
D_{IJ}{}^K=D_{JI}{}^K+\overline T_{JI}{}^K.
\end{equation}
Using this relation and the fact $\nabla g=0$, we obtain
\begin{equation}\label{D-formula}
2D_{IJK}=\overline{\nabla}_I g_{JK}+\overline{\nabla}_J g_{KI}-\overline{\nabla}_K g_{IJ}-\overline T_{IJK}
+\overline T_{JKI}-\overline T_{KIJ}.
\end{equation}
We will compute $D_{IJ}{}^K$ by these formulas. Since the components $g_{IJ}$ satisfy \eqref{normal}, $g$ is described by $\r$-dependent tensors 
$\varphi_{ij}$ on $M$ defined by
$$
g_{00}=1+\varphi_{00}, \quad g_{01}=\varphi_{01}, \quad g_{11}=\varphi_{11}, \quad g_{1\overline1}=h_{1\overline1}
+\varphi_{1\overline1}.
$$
In the construction of a formal solution to the self-dual Einstein equation, we need to examine the effect of a perturbation
\begin{equation}\label{perturb}
\varphi_{ij} \longmapsto \varphi_{ij}+\psi_{ij}, \quad \psi_{ij}=O(\r^m)
\end{equation}
on the curvature quantities of $g$. Then it is useful in the computation to ignore irrelevant terms on which the perturbation causes only changes in higher orders.  Such terms are of the form
\begin{equation}\label{neg}
O(\r)\cdot (\r\pa_\r)^l \mathcal{D} \varphi_{ij},
\end{equation}
where $\mathcal{D}$ is a $\r$-dependent differential operator on $M$. These are called {\it negligible terms}. 
In fact, a negligible term changes by $O(\r^{m+1})$ under the perturbation \eqref{perturb}.
Thus, it suffices to compute $D_{IJ}{}^K$ modulo negligible terms. For simplicity, we assume that the admissible frame $\{Z_1\}$ is unitary with respect to the Levi form; namely $h_{1\ol1}=1$. Noting that $\varphi_{ij}=O(\r)$, we have 
\begin{equation}\label{g-inverse}
\begin{aligned}
g^{\infty\infty}=\frac{1}{4}, \quad g^{\infty0}=g^{\infty1}=0, \quad g^{00}\equiv1-\varphi_{00}, \\
 g^{1\ol1}\equiv-\varphi_{1\ol1}, \quad g^{11}\equiv-\varphi_{\ol1\ol1}\qquad\qquad\quad
\end{aligned}
\end{equation}
modulo negligible terms. By computing with \eqref{nabla-bar}, \eqref{torsion}, \eqref{comm-D}, \eqref{D-formula}, \eqref{g-inverse}
we obtain the following result: 

\begin{lem}[{\cite[Lemma 6.4]{Ma1}}, {\cite[Table 1]{Ma2}}]\label{D}
Let $\{T, Z_1, Z_{\ol1}\}$ be a unitary admissible frame and $\{\mbox{\boldmath $Z$}_I\}=\{\r\pa_\r, \r^2T, \r Z_1, \r Z_{\overline1}\}$ the associated $\Theta$-frame. Then, modulo negligible terms, the components $D_{IJ}{}^K$ are given by
\begin{align*}
&D_{\infty\infty}{}^\infty \equiv-1,  & &D_{\infty0}{}^\infty\equiv D_{\infty1}{}^\infty\equiv0,  & \\
\displaystyle 
&D_{00}{}^\infty\equiv\frac{1}{2}-\frac{1}{8}(\r\pa_\r-4)\varphi_{00},  & 
\displaystyle 
&D_{01}{}^\infty\equiv-\frac{1}{8}(\r\pa_\r-3)\varphi_{01}, \\
\displaystyle 
&D_{1\ol1}{}^\infty\equiv \frac{1}{4}- \frac{1}{8}(\r\pa_\r-2)\varphi_{1\ol1}, &
\displaystyle 
&D_{11}{}^\infty\equiv -\frac{1}{8}(\r\pa_\r-2)\varphi_{11}, \\
&D_{\infty\infty}{}^1\equiv D_{00}{}^1\equiv D_{\ol1\ol1}{}^1\equiv0,  & \displaystyle 
&D_{\infty\ol1}{}^1\equiv \frac{1}{2}\r\pa_\r \varphi_{\ol1\ol1}, &
\displaystyle 
&D_{0\ol1}{}^1\equiv \frac{i}{2}\varphi_{\ol1\ol1},  \\
\displaystyle 
&D_{\infty0}{}^1\equiv \frac{1}{2}(\r\pa_\r+1)\varphi_{0\ol1}, &
\displaystyle 
&D_{01}{}^1\equiv \frac{i}{2}(1+\varphi_{00}-\varphi_{1\ol1}), & 
\displaystyle 
&D_{1\ol1}{}^1\equiv\frac{i}{2}\varphi_{0\ol1}, \\
\displaystyle 
&D_{\infty1}{}^1\equiv -1+\frac{1}{2}\r\pa_\r\varphi_{1\ol1}, &  
\displaystyle 
&D_{\ol1 0}{}^1\equiv \frac{i}{2}\varphi_{\ol1\ol1}+\r^2 A_{\ol1\ol1},  &
\displaystyle 
&D_{11}{}^1\equiv i\varphi_{01}, \\
&D_{\infty\infty}{}^0 \equiv D_{00}{}^0\equiv0, & 
\displaystyle 
&D_{1\ol1}{}^0 \equiv -\frac{i}{2}, &
\displaystyle 
&D_{11}{}^0 \equiv -\r^2 A_{11}, \\
\displaystyle 
&D_{\infty0}{}^0 \equiv -2+\frac{1}{2}\r\pa_\r \varphi_{00}, &
\displaystyle 
&D_{\infty1}{}^0 \equiv\frac{1}{2}(\r\pa_\r-1)\varphi_{01}, &
\displaystyle 
&D_{01}{}^0 \equiv-\frac{i}{2}\varphi_{01}. 
\end{align*}
The components which are not displayed are obtained by taking the complex conjugates or using  
the relation \eqref{comm-D}.
\end{lem}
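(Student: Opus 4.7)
The plan is to compute each $D_{IJ}{}^K$ by specializing the general identity \eqref{D-formula} and then raising the third index with the inverse metric, systematically discarding negligible terms. The first observation, which drastically simplifies the work, is that among the three terms $\ol\nabla_I g_{JK} + \ol\nabla_J g_{KI} - \ol\nabla_K g_{IJ}$ only the piece with the differentiating index equal to $\infty$ can produce a non-negligible contribution: by \eqref{nabla-bar} one has $\ol\nabla_0 g = \r^2 \nabla^{\rm TW}_0 g$ and $\ol\nabla_1 g = \r \nabla^{\rm TW}_1 g$, and these annihilate the constant parts of $g_{IJ}$ (namely $4$ in $g_{\infty\infty}$, $1$ in $g_{00}$, and the Levi form in $g_{1\ol1}$), while on the $\varphi_{ij}$ they produce terms of the form $\r\cdot\mathcal{D}\varphi_{ij}$ with an explicit factor $\r$, which are negligible.

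This reduces the calculation to $\ol\nabla_\infty g_{IJ} = (\r\pa_\r - \#(IJ)) g_{IJ}$ together with the torsion corrections. The integer weights $\#(IJ)$ supply the rational-number coefficients appearing in the table: for example $\#(00)=4$ yields $\ol\nabla_\infty g_{00} = -4 + (\r\pa_\r - 4)\varphi_{00}$, which combined in \eqref{D-formula} and multiplied by $g^{\infty\infty} = \tfrac14$ gives $D_{00}{}^\infty \equiv \tfrac12 - \tfrac18(\r\pa_\r - 4)\varphi_{00}$. The torsion block $-\ol T_{IJK} + \ol T_{JKI} - \ol T_{KIJ}$ is read off from \eqref{torsion}: the only non-vanishing contributions come from $\ol T_{1\ol1}{}^0 = i$ and $\ol T_{01}{}^{\ol1} = \r^2 A_1{}^{\ol1}$ (and their complex conjugates), producing the $\tfrac{i}{2}$ and $\r^2 A_{11}$ terms in the table. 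The upper index is then raised with \eqref{g-inverse}: since $g^{\infty 0}=g^{\infty 1}=0$ and the $\varphi$-corrections to the diagonal entries $g^{00}, g^{1\ol1}$ contribute only negligible cross terms, only the leading values $g^{\infty\infty} = \tfrac14$, $g^{00}\approx 1$, $g^{1\ol1}\approx 1$ effectively appear.

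The main obstacle is not any individual computation but the consistent tracking of negligible terms: because $\ol\nabla_0,\ol\nabla_1$ already carry explicit $\r^2,\r$ factors whereas $\ol\nabla_\infty$ does not, a correction $(\r\pa_\r - c)\varphi$ must be retained while $\r\cdot\nabla^{\rm TW}_1\varphi$ may be dropped, and analogous care is needed when multiplying by the inverse metric. With this convention fixed, each of the twenty-one entries of the table follows from a short symbolic calculation of the kind illustrated above for $D_{00}{}^\infty$, matching the formulas recorded in \cite{Ma1, Ma2}.
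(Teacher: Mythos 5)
Your overall strategy is exactly the paper's: Lemma \ref{D} is obtained by evaluating \eqref{D-formula} entry by entry and then raising the last index with \eqref{g-inverse}, discarding negligible terms throughout. Your first reduction is correct and is indeed the decisive simplification: since $\ol\nabla_0=\r^2\nabla^{\rm TW}_0$ and $\ol\nabla_1=\r\nabla^{\rm TW}_1$ annihilate the constant parts of $g_{IJ}$ and turn the $\varphi_{ij}$ into terms carrying an explicit factor of $\r$, only $\ol\nabla_\infty g_{JK}=(\r\pa_\r-\#(JK))g_{JK}$ survives, and your sample computation of $D_{00}{}^\infty$ is right.

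The gap is in the index-raising step. The claim that ``only the leading values $g^{00}\approx1$, $g^{1\ol1}\approx1$ effectively appear'' is false: the lowered tensor $D_{IJK}$ has $O(1)$ entries (the constants $-2$, $-1$, $-\tfrac{i}{2}$, \dots, coming from the constant parts of $g_{IJ}$ and from $\ol T_{1\ol1}{}^0=i$), and when these multiply the first-order corrections $-\varphi_{00}$, $-\varphi_{1\ol1}$ in $g^{00}$, $g^{\ol11}$, or the $O(\r)$ entries $g^{11}\equiv-\varphi_{\ol1\ol1}$ and $g^{01}\equiv-\varphi_{0\ol1}$, the products are of size $O(\varphi)$ rather than $O(\r)\cdot\mathcal{D}\varphi$, hence are \emph{not} negligible. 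Concretely, $D_{\infty1\ol1}\equiv-1+\tfrac12(\r\pa_\r-2)\varphi_{1\ol1}$, and only after multiplying by $g^{\ol11}\equiv1-\varphi_{1\ol1}$ does the $-\varphi_{1\ol1}$ cancel to give the tabulated $D_{\infty1}{}^1\equiv-1+\tfrac12\r\pa_\r\varphi_{1\ol1}$; with $g^{\ol11}\approx1$ you would be off by the non-negligible term $-\varphi_{1\ol1}$. Likewise $D_{0\ol1}{}^1\equiv\tfrac{i}{2}\varphi_{\ol1\ol1}$ arises entirely from $D_{0\ol11}\equiv-\tfrac{i}{2}(1+\varphi_{00})$ times $g^{11}\equiv-\varphi_{\ol1\ol1}$, $D_{1\ol1}{}^1\equiv\tfrac{i}{2}\varphi_{0\ol1}$ comes from $D_{1\ol10}\equiv-\tfrac{i}{2}$ times $g^{01}$, and the term $-\tfrac{i}{2}\varphi_{1\ol1}$ in $D_{01}{}^1\equiv\tfrac{i}{2}(1+\varphi_{00}-\varphi_{1\ol1})$ --- precisely the correction to Matsumoto's table highlighted in the Remark following the Lemma --- is produced by $\tfrac{i}{2}\cdot(-\varphi_{1\ol1})$ in $D_{01\ol1}\,g^{\ol11}$. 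The same care is needed when lowering the torsion, e.g.\ $\ol T_{1\ol10}=i\,g_{00}=i(1+\varphi_{00})$ carries a non-negligible $\varphi_{00}$ (which in $D_{1\ol1}{}^0$ then cancels against the $-\varphi_{00}$ in $g^{00}$). So the method is sound, but the bookkeeping rule you state for the inverse metric would reproduce the uncorrected table and distort many of the entries; one must retain all $O(\varphi)$ terms of $g^{IJ}$, including the off-diagonal ones, and discard only genuine $O(\r)\cdot\varphi$ products.
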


\begin{rem}
We have modified a typographical error in {\cite[Table 6.2]{Ma1}}, {\cite[Table 1]{Ma2}}; the value of $D_{01}{}^1$ above differs by $-\frac{i}{2} \varphi_{1\ol1}$ from that in \cite{Ma1, Ma2}. (Note that $D_{IJ}{}^K$ is denoted by $D^K{}_{IJ}$ in \cite{Ma1} and by $D_J{}^K{}_I$ in \cite{Ma2}.) The correct value is used in the other computations in \cite{Ma1, Ma2}.
\end{rem}

\section{The self-dual Einstein equation}

Let $g$ be an ACH metric on a four dimensional $\Theta$-manifold $(\ol X, [\Theta])$ which has a strictly pseudoconvex CR manifold $M$ as its boundary. We fix a contact form $\th$ on $M$ and identify a neighborhood of $M$ as $M\times [0, \e)_\r$, where $\r$ is the model defining function for $\th$. We take a unitary admissible frame $\{T, Z_1, Z_{\ol1}\}$ on $M$ and work in the associated $\Theta$-frame $\{\mbox{\boldmath $Z$}_I\}=\{\r\pa_\r, \r^2T, \r Z_1, \r Z_{\overline1}\}$.

\subsection{The Einstein equation}
We will recall from \cite{Ma1, Ma2} the computation of the Einstein tensor modulo negligible terms which is needed in the construction of the Einstein ACH metric. We set
$$
E_{IJ}:=\Ric_{IJ}+\frac{3}{2}g_{IJ}.
$$
In terms of the extended Tanaka--Webster connection and the difference $\Theta$-tensor, the curvature tensor of $g$ is expressed as
\begin{equation}\label{curvature-g}
\begin{aligned}
R_I{}^J{}_{KL}&=\ol R_I{}^J{}_{KL}+\ol\nabla_K D_{LI}{}^J-\ol \nabla_L D_{KI}{}^J \\
& \quad +D_{KM}{}^J D_{LI}{}^M-D_{LM}{}^J D_{KI}{}^M+\ol T_{KL}{}^M D_{MI}{}^J.
\end{aligned} 
\end{equation}
Hence, the Ricci tensor is given by
\begin{equation}\label{Ricci}
\begin{aligned}
\Ric_{IJ}&=R_J{}^K{}_{KI} \\
&=\ol R_{JI}+\ol\nabla_K D_{IJ}{}^K-\ol\nabla_I D_{KJ}{}^K \\
&\quad +D_{KM}{}^K D_{IJ}{}^M-D_{IM}{}^K D_{KJ}{}^M+\ol T_{KI}{}^MD_{MJ}{}^K \\
&=\ol R_{JI}+\ol\nabla_K D_{IJ}{}^K-\ol\nabla_I D_{KJ}{}^K 
+D_{KM}{}^K D_{IJ}{}^M-D_{MI}{}^K D_{KJ}{}^M.
\end{aligned}
\end{equation}
In the last equality, we have used \eqref{comm-D}. With this formula and Lemma \ref{D}, we can compute 
$E_{IJ}$ modulo negligible terms:

\begin{lem}[{\cite[Lemma 6.5] {Ma1}}, {\cite[Lemma 4.2] {Ma2}}]\label{Einstein-tensor}
Let $\{T, Z_1, Z_{\ol1}\}$ be a unitary admissible frame and $\{\mbox{\boldmath $Z$}_I\}=\{\r\pa_\r, \r^2T, \r Z_1, \r Z_{\overline1}\}$ the associated $\Theta$-frame. Then, modulo negligible terms, the components of the Einstein tensor $E_{IJ}$ are given by
\begin{align*}
&E_{\infty\infty}\equiv -\frac{1}{2}\r\pa_\r(\r\pa_\r-4)\varphi_{00}-\r\pa_\r(\r\pa_\r-2)\varphi_{1\ol1}, \\
&E_{\infty0}\equiv 0, \\
&E_{\infty1}\equiv -\frac{i}{2}(\r\pa_\r+1)\varphi_{01}, \\
&E_{00}\equiv -2\r^4|A|^2-\frac{1}{8}\bigl((\r\pa_\r)^2-6\r\pa_\r-4\bigr)\varphi_{00}+\frac{1}{2}(\r\pa_\r-2)
\varphi_{1\ol1}, \\
&E_{01} \equiv \r^3 A_{11,}{}^1-\frac{1}{8}(\r\pa_\r+1)(\r\pa_\r-5)\varphi_{01}, \\
&E_{1\ol1}\equiv \r^2 \scal^{\rm TW} -\frac{1}{8}\bigl((\r\pa_\r)^2-6\r\pa_\r-8\bigr)\varphi_{1\ol1}
+\frac{1}{8}(\r\pa_\r-4)\varphi_{00}, \\
&E_{11}\equiv i\r^2 A_{11}-\r^4 A_{11,0}-\frac{1}{8}\r\pa_\r (\r\pa_\r-4)\varphi_{11}.
\end{align*}
The components which are not displayed are obtained by the symmetry or by taking the complex conjugates.
\end{lem}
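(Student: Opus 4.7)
The plan is to apply the general formula \eqref{Ricci} component by component, substituting the values of $D_{IJ}{}^K$ from Lemma \ref{D}, the covariant derivative rule \eqref{nabla-bar}, the torsion and curvature of $\overline\nabla$ from \eqref{torsion} and \eqref{curvature}, and finally adding $\tfrac{3}{2}g_{IJ}$. For each of the seven independent components $(IJ) \in \{(\infty\infty), (\infty 0), (\infty 1), (00), (01), (1\overline 1), (11)\}$ one carries out this substitution and drops every term that falls under the negligible convention \eqref{neg}.

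Three structural observations make the bookkeeping manageable. First, by \eqref{nabla-bar}, applying $\overline\nabla_0$ or $\overline\nabla_1$ to any expression already containing a factor of $\varphi_{ij}$ produces a factor of $\r^2$ or $\r$ multiplying a derivative of $\varphi$, hence a negligible term; consequently these two covariant derivatives need only be evaluated on the constant and $\r^2 A$ pieces of $D_{IJ}{}^K$. Second, $\overline\nabla_\infty$ acts on a $\Theta$-tensor as the weighted Euler operator $\r\pa_\r - \#(I_1\cdots I_p) + \#(J_1\cdots J_q)$, which on the expressions of Lemma \ref{D} simply shifts the polynomial in $\r\pa_\r$ that acts on $\varphi_{ij}$. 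Third, in a quadratic product $D_{MI}{}^K D_{KJ}{}^M$, only pairings in which at least one factor is a constant (or $\r^2 A$) term survive modulo negligible terms, so each such double sum collapses to a short finite list.

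With these reductions the verification for each component becomes a short algebraic exercise. The curvature terms $\r^2\scal^{\mathrm{TW}}$ in $E_{1\overline 1}$, $\r^3 A_{11,}{}^1$ in $E_{01}$, and $\r^4 A_{11,0}$ in $E_{11}$ arise either directly from $\overline R_{JI}$ via \eqref{curvature} or from $\overline\nabla_0$ applied to the $\r^2 A$-pieces of $D$; the term $-2\r^4|A|^2$ in $E_{00}$ comes from pairing two $\r^2 A$ factors in $D_{MI}{}^K D_{KJ}{}^M$; and the polynomials in $\r\pa_\r$ acting on $\varphi_{ij}$ assemble from the constant$\times\varphi$ cross terms in $D\cdot D$ together with the Euler-operator output of $\overline\nabla_\infty D$. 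The constant$\times$constant contributions to $\Ric_{IJ}$ produce $-\tfrac{3}{2}$ times the leading model metric and so cancel against the $\tfrac{3}{2}g_{IJ}$ at the unperturbed level, which expresses nothing more than the fact that complex hyperbolic space is Einstein with the correct normalization. The main obstacle is not conceptual but the sheer amount of sign- and index-tracking, aggravated by the asymmetry between the $\infty$, $0$, and $1$ indices in the $\#$-shift convention; this computation was carried out in general dimension in \cite[Lemma 6.5]{Ma1} and \cite[Lemma 4.2]{Ma2}, and the formulas above are simply its $n=1$ specialization.
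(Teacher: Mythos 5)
Your proposal is correct and follows exactly the route the paper takes: the paper derives the lemma by substituting the components of Lemma \ref{D} and the rules \eqref{nabla-bar}, \eqref{torsion}, \eqref{curvature} into the Ricci formula \eqref{Ricci}, discarding negligible terms, and adding $\tfrac{3}{2}g_{IJ}$ (the detailed bookkeeping being deferred to \cite[Lemma 6.5]{Ma1}, \cite[Lemma 4.2]{Ma2}). Your structural reductions — the negligibility of $\overline\nabla_0$, $\overline\nabla_1$ acting on $\varphi$-terms, the Euler-operator action of $\overline\nabla_\infty$, and the collapse of the quadratic $D\cdot D$ sums (which is precisely where the corrected $-2\r^4|A|^2$ term in $E_{00}$ originates) — are all accurate descriptions of that computation.
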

\begin{rem}
We have corrected the value of $E_{00}$ in {\cite[Lemma 6.5] {Ma1}}, {\cite[Lemma 4.2]{Ma2}}, where the term $-2\r^4|A|^2$ is missed, though this modification has no significant effect on the construction of Einstein ACH metric.
\end{rem}

\subsection{The self-dual equation}
Let $\{\bth^I\}$ be the dual $\Theta$-coframe of $\{\mbox{\boldmath $Z$}_I\}$. We take the orientation of 
$\overline X$ such that $\th\wedge d\th\wedge d\r>0$, and define a skew symmetric $\Theta$-tensor
 $\varepsilon_{IJKL}$ by
$$
vol_g=\frac{1}{4!}\varepsilon_{IJKL}\bth^I\wedge\bth^J\wedge\bth^K\wedge\bth^L,
$$
where $vol_g$ is the volume form of $g$. Since $\det (g_{IJ})\equiv -4(1+\varphi_{00}+2\varphi_{1\ol1})$ modulo 
negligible terms, we have 
\begin{align*}
vol_g&=|\det(g_{IJ})|^{1/2}\,i\bth^0\wedge\bth^1\wedge\bth^{\ol1}\wedge\bth^\infty \\
&\equiv (2i+i\varphi_{00}+2i\varphi_{1\ol1})\,\bth^0\wedge\bth^1\wedge\bth^{\ol1}\wedge\bth^\infty,
\end{align*}
and hence
\begin{equation}\label{epsilon}
\varepsilon_{01\ol1\infty}\equiv 2i+i\varphi_{00}+2i\varphi_{1\ol1}.
\end{equation}
Let $P_{IJ}=\frac{1}{2} \Ric_{IJ}-\frac{1}{12} \scal\, g_{IJ}$ be the Schouten tensor, and let 
$$
W_{IJKL}=R_{IJKL}+g_{IK}P_{JL}-g_{JK}P_{IL}+g_{JL}P_{IK}-g_{IL}P_{JK}
$$
be the Weyl curvature. Since $\ol X$ is four dimensional, we can define the anti self-dual part of the Weyl curvature, which is given by
$$
W^-_{IJKL}=\frac{1}{2}\Bigl(W_{IJKL}-\frac{1}{2}\varepsilon_{KL}{}^{PQ}W_{IJPQ}\Bigr).
$$
Note that $W^-_{IJKL}$ has the same symmetry as the Weyl curvature and satisfies 
$$
\frac{1}{2}\varepsilon_{KL}{}^{PQ}W^-_{IJPQ}=-W^-_{IJKL}.
$$
Thus, by \eqref{g-inverse}, \eqref{epsilon}, we have
\begin{align}
\label{W-s1}
&W^-_{\infty0\infty0}\equiv-W^-_{\infty1\infty\ol1}-W^-_{\infty\ol1\infty1}=-2W^-_{\infty1\infty\ol1}, \\
\label{W-s2}
&W^-_{IJ01}=-\varepsilon_{01\ol1\infty}W^-_{IJ}{}^{\,\ol1\infty}\equiv-\frac{i}{2}W^-_{IJ1\infty}, \\ 
\label{W-s3}
&W^-_{IJ1\ol1}=-\varepsilon_{1\ol10\infty}W^-_{IJ}{}^{\,0\infty}\equiv-\frac{i}{2}W^-_{IJ0\infty}
\end{align}
modulo $O(\r)\cdot W^-_{IJKL}$. Since $W^-_{IJKL}=W^-_{KLIJ}$, we also have
\begin{align*}
W^-_{01KL}&\equiv-\frac{i}{2}W^-_{1\infty KL}, \\
W^-_{1\ol1KL}&\equiv-\frac{i}{2}W^-_{0\infty KL}
\end{align*}
modulo $O(\r)\cdot W^-_{IJKL}$. As a consequence, we have the following lemma:
\begin{lem}\label{self-dual-lemma}
Let $m$ be a positive integer. If $W^-_{\infty1\infty1}, W^-_{\infty0\infty1}, W^-_{\infty0\infty0}=O(\r^m)$, then 
$W^-_{IJKL}=O(\r^m)$.
\end{lem}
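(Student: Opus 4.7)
The plan is to show that every component of $W^-_{IJKL}$ in the $\Theta$-frame can be expressed, modulo terms of the form $O(\r)\cdot W^-$, as a linear combination of the three hypothesized components and their complex conjugates; then a simple bootstrap argument converts the hypothesis $W^-_{\infty 1\infty 1},W^-_{\infty 0\infty 1},W^-_{\infty 0\infty 0}=O(\r^m)$ into the global estimate $W^-_{IJKL}=O(\r^m)$.

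For the reduction, first observe that by the standard Weyl symmetries, a component is determined up to sign by the unordered pair of skew index pairs chosen from $[\infty 0],[\infty 1],[\infty\ol1],[01],[0\ol1],[1\ol1]$. The anti-self-duality relations \eqref{W-s2}--\eqref{W-s3}, together with the complex conjugate of \eqref{W-s2}, rewrite the three pairs $[01],[1\ol1],[0\ol1]$ in terms of $[1\infty],[0\infty],[\ol1\infty]$, respectively, each modulo $O(\r)\cdot W^-$. Applying this on both sides reduces the analysis to the six essential components
\[
W^-_{\infty 0\infty 0},\ W^-_{\infty 0\infty 1},\ W^-_{\infty 0\infty \ol1},\ W^-_{\infty 1\infty 1},\ W^-_{\infty 1\infty \ol1},\ W^-_{\infty \ol1\infty \ol1}.
\]
Complex conjugation (which swaps $1\leftrightarrow\ol1$) identifies $\ol{W^-_{\infty 0\infty \ol 1}}$ with $W^-_{\infty 0\infty 1}$ and $\ol{W^-_{\infty\ol1\infty\ol1}}$ with $W^-_{\infty 1\infty 1}$, while relation \eqref{W-s1} yields $W^-_{\infty 1\infty\ol1}\equiv-\tfrac12 W^-_{\infty 0\infty 0}$ modulo $O(\r)\cdot W^-$; this accounts for the last essential component.

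Consequently every component of $W^-$ satisfies $W^-_{IJKL}=O(\r^m)+O(\r)\cdot W^-$ uniformly. Letting $k$ denote the supremum of integers for which $W^-_{IJKL}=O(\r^k)$ uniformly in $I,J,K,L$, this estimate forces $k\geq\min(m,k+1)$, hence $k\geq m$, which is the claim. The only care needed is in the bookkeeping---tracking the $-i/2$ factors from \eqref{W-s2}--\eqref{W-s3} and verifying that the two pair-reductions commute modulo $O(\r)\cdot W^-$; conceptually, the point is that anti-self-duality together with reality collapses the $5$-real-parameter fiber of $W^-$ in dimension four precisely onto the $5$ real parameters carried by the three hypothesized components (one real and two complex).
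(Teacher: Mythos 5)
Your proposal is correct and follows essentially the same route as the paper: the paper derives exactly the relations \eqref{W-s1}--\eqref{W-s3} (and their pair-swapped versions via $W^-_{IJKL}=W^-_{KLIJ}$) and states the lemma "as a consequence," which is precisely your reduction of all components to the three hypothesized ones (plus conjugates) modulo $O(\r)\cdot W^-$. Your explicit bootstrap on the vanishing order is the standard way to convert that reduction into the conclusion, and the dimension count at the end correctly explains why three components suffice.
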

Thus, in order to solve the self-dual equation $W^-_{IJKL}=O(\r^\infty)$, we only have to deal with the three components indicated above.

Next, we consider the Bianchi identity which relates the self-dual equation to the Einstein equation.
Let $C_{IJK}:=\nabla_K P_{IJ}-\nabla_J P_{IK}$ be the Cotton tensor of $g$ and define the anti self-dual part 
$C^-_{IJK}$ by
$$
C^-_{IJK}=\frac{1}{2}\Bigl(C_{IJK}-\frac{1}{2}\varepsilon_{JK}{}^{PQ}C_{IPQ}\Bigr).
$$
Then, since $\nabla_I\varepsilon_{JKLM}=0$, the Bianchi identity $\nabla^I W_{IJKL}=C_{JKL}$ yields
\begin{equation*}
\nabla^I W^-_{IJKL}=C^-_{JKL}.
\end{equation*}
If $g$ satisfies $E_{IJ}=O(\r^m)$ for some $m\ge 1$, then we have $P_{IJ}=-\frac{1}{4}g_{IJ}+O(\r^m)$ and hence $C^-_{IJK}=O(\r^m)$ since the covariant differentiation does not decrease the vanishing order of a $\Theta$-tensor.   
Therefore, it holds that
$$
E_{IJ}=O(\r^m)\Longrightarrow \nabla^I W^-_{IJKL}=O(\r^m).
$$
To derive the consequence of the latter equation, we will compute 
\begin{equation}\label{div-W-bar}
\begin{aligned}
\nabla^I W^-_{IJKL}&=\ol\nabla ^I W^-_{IJKL}-W^-_{MJKL}D^I{}_I{}^M-W^-_{IMKL}D^I{}_J{}^M \\
& \quad -W^-_{IJML}D^I{}_K{}^M-W^-_{IJKM} D^I{}_L{}^M
\end{aligned}
\end{equation}
modulo $O(\r)\cdot \mathcal{D}W^-_{IJKL}$, where $\mathcal{D}$ is a $\r$-dependent differential operator on $M$. By computations similar to \eqref{W-s1}, \eqref{W-s2}, \eqref{W-s3}, we have
\begin{align}
\label{nabla-W1}
&\nabla^I W^-_{I0\infty0}\equiv -2\nabla^I W^-_{I1\infty\ol1}, \\
\label{nabla-W2}
&\nabla^I W^-_{IJ01}\equiv -\frac{i}{2}\nabla^I W^-_{IJ1\infty}, \\
\label{nabla-W3}
&\nabla^I W^-_{IJ1\ol1}\equiv -\frac{i}{2}\nabla^I W^-_{IJ0\infty}
\end{align}
modulo $O(\r)\cdot \mathcal{D}W^-_{IJKL}$. By \eqref{nabla-W2} and \eqref{nabla-W3}, it suffices to consider the cases where $K=\infty$. Then, taking complex conjugates we may assume that $L=0, 1$, and the case $(J, K, L)=(\ol1, \infty, 0)$ is reduced to the case $(J, K, L)=(1, \infty, 0)$. Moreover, by \eqref{nabla-W1} the case $(J, K, L)=(\ol1, \infty, 1)$ is reduced to the case $(J, K, L)=(0, \infty, 0)$. Thus, it suffices to compute \eqref{div-W-bar} for 
$$
(J, K, L)=(1, \infty, 1), (0, \infty, 0), (0, \infty, 1), (1, \infty, 0), (\infty, \infty, 1), 
(\infty, \infty, 0).
$$ 
By \eqref{nabla-bar}, we have 
\begin{align*}
\ol\nabla ^I W^-_{IJKL}&=\ol\nabla ^\infty W^-_{\infty JKL}+\ol\nabla ^1 W^-_{1JKL}+\ol\nabla ^{\ol1} W^-_{\ol1JKL}+\ol\nabla ^0 W^-_{0JKL} \\
&\equiv\frac{1}{4}\bigl(\r\pa_\r-\#(\infty JKL)\bigr)W^-_{\infty JKL}.
\end{align*}
The other terms in the right-hand side of \eqref{div-W-bar} can be computed by Lemma \ref{D}. 
The final results are:
\begin{equation}\label{div-W}
\begin{aligned}
&\nabla^I W^-_{I1\infty1}\equiv \frac{1}{4}(\r\pa_\r-4)W^-_{\infty1\infty1}, &
&\nabla^I W^-_{I0\infty0}\equiv \frac{1}{4}(\r\pa_\r-6)W^-_{\infty0\infty0}, \\
&\nabla^I W^-_{I0\infty1}\equiv \frac{1}{4}(\r\pa_\r-6)W^-_{\infty0\infty1}, & 
&\nabla^I W^-_{I1\infty0}\equiv \frac{1}{4}(\r\pa_\r-5)W^-_{\infty1\infty0}, \\
&\nabla^I W^-_{I\infty\infty1}\equiv \frac{i}{2} W^-_{\infty0\infty1}, & 
&\nabla^I W^-_{I\infty\infty0}\equiv 0.
\end{aligned}
\end{equation}

Consequently, by an inductive argument, we have the following implication:
$$
E_{IJ}=O(\r^4) \Longrightarrow  W^-_{IJKL}=O(\r^4).
$$
Moreover, if $E_{IJ}=O(\r^5)$ then $W^-_{\infty0\infty0}, W^-_{\infty0\infty1}=O(\r^5)$, but we cannot conclude that $W^-_{\infty1\infty1}=O(\r^5)$. 
Thus, we may use the equation $W^-_{\infty1\infty1}=O(\r^5)$ as a normalization on the metric which is independent of the Einstein equation. We will also use a normalization on the $\r^6$-term in $W^-_{\infty0\infty0}$ whose vanishing is not imposed by the Einstein equation.
To make sure that such normalizations in fact work, we must calculate the variations of $W^-_{\infty1\infty1}$ and $W^-_{\infty0\infty0}$ under the perturbation \eqref{perturb}. 

First, we calculate the relevant components of the curvature tensor modulo negligible terms.
Since the curvature tensor is given by \eqref{curvature} and $R_{IJ\infty K}=-4R_K{}^\infty{}_{IJ}$, we obtain the following result by a straightforward computation using \eqref{nabla-bar} and Lemma \ref{D}:
\begin{equation}\label{curvature-comp}
\begin{aligned}
&R_{\infty0\infty0}\equiv4+\frac{1}{2}\bigl((\r\pa_\r)^2-4\r\pa_\r+8\bigr)\varphi_{00,} &
&R_{01\infty0}\equiv\frac{i}{4}(\r\pa_\r+1)\varphi_{01}, \\
&R_{\infty1\infty1}\equiv\frac{1}{2}\bigl((\r\pa_\r)^2-2\r\pa_\r+2\bigr)\varphi_{11}, &
&R_{01\infty1}\equiv\r^2A_{11}-\frac{i}{4}\r\pa_\r \varphi_{11}, \\
&R_{0\ol1\infty1}\equiv -\frac{i}{2}+\frac{i}{4}(\r\pa_\r-2)\varphi_{00}-\frac{i}{4}\r\pa_\r\varphi_{1\ol1}, &
&R_{1\ol1\infty1}\equiv \frac{3i}{4}(\r\pa_\r-1)\varphi_{01}, \\
&R_{1\ol1\infty0}\equiv -i+\frac{i}{2}(\r\pa_\r-2)\varphi_{00}-\frac{i}{2}\r\pa_\r\varphi_{1\ol1}.
\end{aligned}
\end{equation}
These equations enable us to compute the variations of the curvature components under the perturbation 
\eqref{perturb}, which we denote by putting `$\d$' to each component. For example, by the first equation in 
\eqref{curvature-comp}, we have 
$$
\d R_{\infty0\infty0}=\frac{1}{2}(m^2-4m+8)\psi_{00}+O(\r^{m+1}).
$$
Next, we calculate the variation of the Schouten tensor 
$$
P_{IJ}=\frac{1}{2}E_{IJ}-\frac{1}{12}(E_{K}{}^K+3)g_{IJ}.
$$
Since $E_{IJ}=O(\r)$ by Lemma \ref{Einstein-tensor}, we have  
$$
\d P_{IJ}=\frac{1}{2}\d E_{IJ}-\frac{1}{12}g^{KL}(\d E_{KL}) g_{IJ}-\frac{1}{4}\d g_{IJ}+O(\r^{m+1}),
$$
which yields
\begin{equation}\label{d-P}
\begin{aligned}
&\d P_{\infty\infty}=-\frac{1}{6}(m^2-3m-1)\psi_{00}-\frac{1}{6}(2m^2-m+2)\psi_{1\ol1}+O(\r^{m+1}), \\
&\d P_{\infty0}=O(\r^{m+1}), \\
&\d P_{\infty1}=-\frac{i}{4}(m+1)\psi_{01}+O(\r^{m+1}), \\
&\d P_{00}=-\frac{1}{24}(m^2-6m-1)\psi_{00}+\frac{1}{24}(m^2+m-14)\psi_{1\ol1}+O(\r^{m+1}), \\
&\d P_{01}=-\frac{1}{16}(m^2-4m-1)\psi_{01}+O(\r^{m+1}), \\
&\d P_{1\ol1}=\frac{1}{48}(m^2-3m-10)\psi_{00}-\frac{1}{48}(m^2-8m-8)\psi_{1\ol1}+O(\r^{m+1}), \\
&\d P_{11}=-\frac{1}{16}(m^2-4m+4)\psi_{11}+O(\r^{m+1}).
\end{aligned}
\end{equation}
From \eqref{g-inverse}, \eqref{curvature-comp}, and \eqref{d-P}, we have 

\begin{equation}\label{d-W}
\begin{aligned}
&W_{\infty1}{}^{0\ol1}=O(\r), \quad W_{\infty0}{}^{\ol11}=-i+O(\r), \\
&\d W_{\infty1}{}^{0\ol1}=-\frac{i}{4}(m-2)\psi_{11}+O(\r^{m+1}), \\
&\d W_{\infty0}{}^{\ol11}=\frac{i}{2}(m-2)\psi_{00}-\frac{i}{2}(m-4)\psi_{1\ol1}+O(\r^{m+1}), \\
&\d W_{\infty1\infty1}=\frac{1}{4}(m^2-4)\psi_{11}+O(\r^{m+1}), \\
&\d W_{\infty0\infty0}=\frac{1}{6}(m^2-3m+20)\psi_{00}-\frac{1}{6}(m^2-2m+16)\psi_{1\ol1}+O(\r^{m+1}).
\end{aligned}
\end{equation}
Finally, by \eqref{epsilon} and \eqref{d-W}, we obtain

\begin{align}
\d W^-_{\infty1\infty1}&=\frac{1}{2}(\d W_{\infty1\infty1}-
\d\varepsilon_{\infty10\ol1}\cdot W_{\infty1}{}^{0\ol1}-\varepsilon_{\infty10\ol1}\cdot\d W_{\infty1}{}^{0\ol1})         +O(\r^{m+1}) \label{var-W1} \\
&=\frac{1}{8}(m^2-2m)\psi_{11}+O(\r^{m+1}), \notag \\
\d W^-_{\infty0\infty0}&=\frac{1}{2}(\d W_{\infty0\infty0}-
\d\varepsilon_{\infty0\ol11}\cdot W_{\infty0}{}^{\ol11}-\varepsilon_{\infty0\ol11}\cdot\d W_{\infty0}{}^{\ol11})  +O(\r^{m+1}) \label{var-W2} \\
&=\frac{1}{12}(m^2+3m+2)\psi_{00}-\frac{1}{12}(m^2+4m+4)\psi_{1\ol1}+O(\r^{m+1}). \notag
\end{align}

\subsection{Bianchi identities}
Since the Einstein equation is an overdetermined system, we need some relations which are satisfied by the components of the Einstein tensor in order to construct a formal solution to the Einstein equation. Some of them are given by the Bianchi identity $g^{IJ}\nabla_K E_{IJ}=2g^{IJ}\nabla_I E_{JK}$:
\begin{lem}[{\cite[Lemma 6.6]{Ma1}}, {\cite[Lemma 6.1]{Ma2}}]
Suppose $g$ satisfies $E_{IJ}=O(\r^m)$ for an integer $m\ge1$. Then, we have
\begin{align}
(m-8)E_{\infty\infty}-4(m-4)E_{00}-8(m-2)E_{1\ol1}&=O(\r^{m+1}), \label{Bianchi1} \\
(m-6)E_{\infty0}&=O(\r^{m+1}), \label{Bianchi2} \\
(m-5)E_{\infty1}-4iE_{01}&=O(\r^{m+1}). \label{Bianchi3}
\end{align}
\end{lem}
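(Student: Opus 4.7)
The identity $g^{IJ}\nabla_K E_{IJ}=2g^{IJ}\nabla_I E_{JK}$ is the twice-contracted second Bianchi identity rewritten through $E_{IJ}$: since $\nabla g=0$ we have $\nabla^I E_{IJ}=\nabla^I\Ric_{IJ}=\tfrac{1}{2}\nabla_J\scal$, while the trace $g^{IJ}E_{IJ}=\scal+6$ gives $\nabla_J\scal=g^{IJ}\nabla_J E_{IJ}$. The plan is therefore to expand both sides of this identity modulo $O(\r^{m+1})$ using the decomposition $\nabla=\ol\nabla+D$ together with \eqref{nabla-bar} and Lemma~\ref{D}.

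Two observations drive the computation. First, when $E_{IJ}=O(\r^m)$, formula \eqref{nabla-bar} gives $\ol\nabla_0 E_{IJ}=O(\r^{m+2})$ and $\ol\nabla_1 E_{IJ}=O(\r^{m+1})$, so modulo $O(\r^{m+1})$ the only surviving covariant derivative is $\ol\nabla_\infty E_{IJ}\equiv(m-\#(IJ))\,E_{IJ}$. Second, the $D\cdot E$ terms reduce to their boundary values: from Lemma~\ref{D} one reads $D_{\infty I}{}^M|_M=-\#(I)\,\delta_I{}^M$, together with $D_{00}{}^\infty|_M=\tfrac{1}{2}$, $D_{1\ol1}{}^\infty|_M=\tfrac{1}{4}$, $D_{01}{}^1|_M=\tfrac{i}{2}$, $D_{1\ol1}{}^0|_M=-\tfrac{i}{2}$, and their complex conjugates; since $\ol T_{\infty I}{}^M=0$ from \eqref{torsion}, one also has $D_{I\infty}{}^M=D_{\infty I}{}^M$. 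Combined with the boundary values of the inverse metric ($g^{\infty\infty}=\tfrac{1}{4}$, $g^{\infty I}=0$ for $I\ne\infty$, and $g^{00}|_M=g^{1\ol1}|_M=1$), each side becomes an explicit linear combination of the $E_{IJ}$ components modulo $O(\r^{m+1})$.

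For $K=\infty$ the pure $\ol\nabla_\infty$ part of the left-hand side is
\begin{equation*}
g^{IJ}\ol\nabla_\infty E_{IJ}\equiv\tfrac{1}{4}(m-2)E_{\infty\infty}+(m-4)E_{00}+2(m-2)E_{1\ol1},
\end{equation*}
and after adding the $D$-contractions on both sides and simplifying, the difference equals $-\tfrac{1}{4}$ times the expression in \eqref{Bianchi1}. For $K=0$, the only component entering at leading order through $g^{\infty\infty}$ is $E_{\infty 0}$, and combining the weight $m-3$ from $\ol\nabla_\infty E_{\infty 0}$ with the $D$-corrections assembles into the factor $(m-6)$, proving \eqref{Bianchi2}. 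For $K=1$, both $E_{\infty 1}$ (with weight $m-2$ from $\ol\nabla_\infty$) and $E_{01}$ (entering via $D_{1\ol1}{}^0|_M=-\tfrac{i}{2}$ contracted against $g^{1\ol1}|_M=1$) appear, yielding \eqref{Bianchi3} with coefficient $-4i$.

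The main obstacle is not conceptual but purely bookkeeping: the specific coefficients $(m-8)$, $-4(m-4)$, $-8(m-2)$, $(m-6)$, $(m-5)$, $-4i$ emerge only after collecting and simplifying all the $D$-contractions, and there is no short conceptual explanation for them. However, every piece of required data is contained in Lemma~\ref{D}, \eqref{nabla-bar}, and the leading expansion of $g^{IJ}$, so the verification is a finite mechanical computation.
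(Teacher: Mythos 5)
Your proposal is correct and follows exactly the route the paper indicates: the three relations are obtained by expanding the contracted Bianchi identity $g^{IJ}\nabla_K E_{IJ}=2g^{IJ}\nabla_I E_{JK}$ for $K=\infty,0,1$ via $\nabla=\ol\nabla+D$, keeping only $\ol\nabla_\infty E_{IJ}\equiv(m-\#(IJ))E_{IJ}$ and the boundary values of $D_{IJ}{}^K$ and $g^{IJ}$ modulo $O(\r^{m+1})$ (the paper itself defers the bookkeeping to \cite[Lemma 6.6]{Ma1}, \cite[Lemma 6.1]{Ma2}). I checked the three contractions and your stated outcomes — the $K=\infty$ case yielding $-\tfrac14$ times \eqref{Bianchi1}, and the $(m-6)$ and $(m-5)$, $-4i$ coefficients for $K=0,1$ — are all consistent with Lemma \ref{D} and \eqref{nabla-bar}.
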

We will also use some equations obtained from the Bianchi identity $\nabla^I W^-_{IJKL}=C^-_{JKL}$ in the construction of $g$. Since the Cotton tensor is given by 
$$
C_{IJK}=\frac{1}{2}(\nabla_K E_{IJ}-\nabla_J E_{IK})-\frac{1}{12}\bigl((\nabla_K E_L{}^L) g_{IJ}-(\nabla_J E_L{}^L) g_{IK}\bigr),
$$
we can compute the components $C^-_{IJK}$ in terms of $E_{IJ}$ by using \eqref{nabla-bar}, \eqref{g-inverse}, \eqref{epsilon}, and Lemma \ref{D}. As a result, we have the following lemma:
\begin{lem}
Suppose $g$ satisfies $E_{IJ}=O(\r^m)$ for an integer $m\ge1$. Then, we have
\begin{align}
&C^-_{1\infty1}=-\frac{1}{4}(m-2)E_{11}+O(\r^{m+1}), \label{C1} \\
&C^-_{0\infty0}=-\frac{5}{24}m E_{00}+\frac{1}{96}(m-12)E_{\infty\infty}+\frac{1}{12}(m+6)E_{1\ol1}
+O(\r^{m+1}), \label{C2}  \\
&C^-_{\infty\infty0}=-\frac{1}{4}(m-2)E_{\infty0}+O(\r^{m+1}). \label{C3}
\end{align}
\end{lem}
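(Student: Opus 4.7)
The approach is a direct calculation entirely parallel to the derivation of \eqref{div-W} and of Lemma \ref{Einstein-tensor}. I start from the formula
\[
C_{IJK}=\tfrac{1}{2}(\nabla_K E_{IJ}-\nabla_J E_{IK})-\tfrac{1}{12}\bigl((\nabla_K E_L{}^L)g_{IJ}-(\nabla_J E_L{}^L)g_{IK}\bigr)
\]
displayed just before the statement, write every $\nabla$ as $\ol\nabla+D$, and then use Lemma \ref{D} together with \eqref{nabla-bar} to reduce to a finite linear combination of the $E_{IJ}$ modulo $O(\r^{m+1})$.

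The hypothesis $E_{IJ}=O(\r^m)$ gives a drastic simplification. By \eqref{nabla-bar}, $\ol\nabla_0$ and $\ol\nabla_1,\ol\nabla_{\ol 1}$ raise the $\r$-order by $2$ and $1$ respectively, so applied to $E_{IJ}$ they produce $O(\r^{m+1})$ terms; only $\ol\nabla_\infty E_{IJ}\equiv(m-\#(IJ))E_{IJ}$ survives. The difference-tensor corrections $-D_{KI}{}^L E_{LJ}-D_{KJ}{}^L E_{IL}$ contribute genuine $O(\r^m)$ terms, but only via those entries of $D$ whose leading values in Lemma \ref{D} are of size $O(1)$; the $\varphi_{ij}$-dependent parts of $D$ multiply $E_{IJ}$ to yield $O(\r^{m+1})$ and can be discarded. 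For the trace I use $\nabla g=0$ to write $\nabla_K E_L{}^L=g^{LM}\nabla_K E_{LM}$ and then apply the same reductions together with \eqref{g-inverse}, which gives $g^{LM}E_{LM}\equiv\tfrac{1}{4}E_{\infty\infty}+E_{00}+2E_{1\ol1}$ modulo negligible terms.

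Having computed each $C_{IJK}$ as an explicit linear combination of the $E_{LM}$ modulo $O(\r^{m+1})$, I form the anti-self-dual part
\[
C^-_{IJK}=\tfrac{1}{2}\bigl(C_{IJK}-\tfrac{1}{2}\varepsilon_{JK}{}^{PQ}C_{IPQ}\bigr)
\]
by reading off $\varepsilon_{JK}{}^{PQ}$ from \eqref{epsilon} and \eqref{g-inverse}. For the three triples $(I,J,K)=(1,\infty,1),(0,\infty,0),(\infty,\infty,0)$ in the statement only a few pairings $(PQ)$ contribute at leading order; after collecting all terms, one reads off \eqref{C1}--\eqref{C3}, and the Bianchi identities \eqref{Bianchi1}--\eqref{Bianchi3} can if needed be invoked to rewrite the resulting combinations in exactly the displayed form. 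The main obstacle is organizational rather than conceptual: the computation is purely algebraic once the reductions above are in place, but one must carefully track the $D$-contributions, the trace term, and the anti-self-dualization of each individual $C_{IPQ}$. Organizing the calculation component by component, exactly as was done in deriving \eqref{div-W}, keeps the bookkeeping manageable.
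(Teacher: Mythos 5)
Your proposal is correct and is exactly the computation the paper intends: the paper itself derives \eqref{C1}--\eqref{C3} by substituting the displayed formula for $C_{IJK}$ in terms of $E_{IJ}$, reducing $\nabla$ to $\ol\nabla+D$ via \eqref{nabla-bar} and Lemma \ref{D} (so that only $\ol\nabla_\infty E_{IJ}\equiv(m-\#(IJ))E_{IJ}$ and the $O(1)$ entries of $D$ survive modulo $O(\r^{m+1})$), and then anti-self-dualizing with \eqref{epsilon} and \eqref{g-inverse}. Your identification of the negligible contributions (the $\ol\nabla_0,\ol\nabla_1$ derivatives, the $O(\r)$ parts of $D$, and the trace term) matches the paper's reductions, so no further comment is needed.
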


\section{Construction of the metric}

\subsection{Formal solution to the self-dual Einstein equation}
Let $M$ be a three dimensional strictly pseudoconvex CR manifold. We fix a contact form $\th$ and construct a one parameter family of ACH metrics $g^\lambda$ on $\ol X=M\times [0, \infty)_\r$ which are in normal form with respect to $\th$ and satisfy the Einstein equation to infinite order.
The parameter $\lambda\in\mathbb{R}$ is involved in the normalization on the $\r^6$-term in $g^\lambda$, 
and if $\lambda=0$ the metric is self-dual to infinite order. As in the previous section, we take the $\Theta$-frame $\{\mbox{\boldmath $Z$}_I\}=\{\r\pa_\r, \r^2T, \r Z_1, \r Z_{\overline1}\}$ associated with a unitary admissible frame $\{T, Z_1, Z_{\ol1}\}$ on $M$. We suppress the superscript $\lambda$ in the following.

First we show a lemma which assures that our normalization condition is independent of the choice of $\th$.
\begin{lem}\label{eta}
Suppose that an ACH metric $g$ on $\ol X$ satisfies $W^-_{IJKL}=O(\r^6)$, and let $\r_\th$ be the model defining function associated with a contact form $\th$. Then, 
\begin{equation}\label{def-eta}
\eta_\th:=\bigl(\r_\th^{-6}W^-_{\infty0\infty0}\bigr)\big|_M
\end{equation}
satisfies $\eta_{\wh\th}=e^{-3\U}\eta_\th$ for the rescaling $\wh\th=e^{\U}\th$.
\end{lem}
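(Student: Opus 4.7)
The plan is to interpret $\eta_\th$ as the leading Taylor coefficient along $M$ of the scalar function $W^-(\mbox{\boldmath $Z$}_\infty, \mbox{\boldmath $Z$}_0, \mbox{\boldmath $Z$}_\infty, \mbox{\boldmath $Z$}_0)$, show that this coefficient depends only on the boundary values $\mbox{\boldmath $Z$}_\infty|_M$ and $\mbox{\boldmath $Z$}_0|_M$, and then combine Proposition \ref{Z-M} with the transformation rule \eqref{rho} for the model defining function.

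Let $\r, \wh\r$ be the model defining functions associated with $\th$ and $\wh\th$, and write $F := W^-_{\infty 0 \infty 0}$ and $\wh F := \wh W^-_{\infty 0 \infty 0}$ for the components in the two associated $\Theta$-frames. The change-of-frame matrix between any two $\Theta$-frames is smooth and nondegenerate up to $M$, so the hypothesis $W^-_{IJKL} = O(\r^6)$ holds in either frame; in particular $F, \wh F = O(\r^6)$. The first step is to compare $F$ and $\wh F$. By Proposition \ref{Z-M}, the differences $\wh{\mbox{\boldmath $Z$}}_\infty - \mbox{\boldmath $Z$}_\infty$ and $\wh{\mbox{\boldmath $Z$}}_0 - \mbox{\boldmath $Z$}_0$ are $\Theta$-vector fields that vanish along $M$, hence each is $\r$ times a smooth $\Theta$-vector field. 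Multilinearity of $W^-$ together with $W^-_{IJKL} = O(\r^6)$ then forces $F - \wh F = O(\r^7)$.

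For the second step, the relation $\wh\r = e^{\U/2}\r + O(\r^2)$ from \eqref{rho} yields $(\wh\r/\r)|_M = e^{\U/2}$, so
\[
\eta_{\wh\th} = \bigl(\wh\r^{-6}\,\wh F\bigr)\big|_M = \bigl(\wh\r/\r\bigr)^{-6}\big|_M \cdot \bigl(\r^{-6}\,\wh F\bigr)\big|_M = e^{-3\U}\,\bigl(\r^{-6}\,\wh F\bigr)\big|_M.
\]
Since $F - \wh F = O(\r^7)$, we have $(\r^{-6}\wh F)|_M = (\r^{-6}F)|_M = \eta_\th$, and therefore $\eta_{\wh\th} = e^{-3\U}\eta_\th$. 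No step is technically hard; the only subtlety is the frame-invariance of the order condition $W^-_{IJKL}=O(\r^6)$ and the application of Proposition \ref{Z-M} to secure the vanishing of the frame-difference at the indices $\infty$ and $0$, which is precisely what singles out these components as the natural ones on which to build a CR invariant.
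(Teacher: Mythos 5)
Your proof is correct and follows essentially the same route as the paper: Proposition \ref{Z-M} to get $\wh W^-_{\infty0\infty0}=W^-_{\infty0\infty0}+O(\r^7)$ (you simply spell out the multilinearity/divisibility-by-$\r$ step that the paper leaves implicit), followed by the relation \eqref{rho} between the model defining functions to extract the factor $e^{-3\U}$.
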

\begin{proof}
By Proposition \ref{Z-M}, $\mbox{\boldmath $Z$}_\infty|_M$ and $\mbox{\boldmath $Z$}_0|_M$ are determined by $g$ and independent of $\th$. Thus, we have ${\wh W}^-_{\infty0\infty0}=W^-_{\infty0\infty0}+O(\r^7)$. Since $\r_{\wh\th}=e^{\U/2}\r_\th+O(\r^2)$ by \eqref{rho}, we obtain $\eta_{\wh\th}=e^{-3\U}\eta_\th$.
\end{proof}

This lemma implies that if $\eta_\th$ has a universal expression in terms of the Tanaka--Webster connection, then it defines a CR invariant $\eta\in\calE(-3, -3)$. Since such a CR invariant is necessarily 
a multiple of the obstruction density \cite{G}, we are led to the CR invariant normalization 
$\eta=\lambda \mathcal{O}$.

Now we construct the metric and prove Theorem \ref{main-thm}. We start with an arbitrary normal form ACH metric $g^{\tiny{(1)}}_{IJ}$, which automatically satisfies $E_{IJ}=O(\r)$ by Lemma \ref{Einstein-tensor}.  
Supposing that we have a normal form ACH metric $g^{(m)}_{IJ}$ such that $E_{IJ}=O(\r^m)$, we consider a perturbed metric 
$$
g^{(m+1)}_{IJ}=g^{(m)}_{IJ}+\psi_{IJ}, \quad \psi_{\infty J}=0, \ \psi_{IJ}=O(\r^m)
$$
and try to solve $E_{IJ}=O(\r^{m+1})$. We also take $W^-_{IJKL}$ into consideration in each inductive step 
by using the following equations modulo $O(\r)\cdot\mathcal{D}W^-_{IJKL}$ from \eqref{div-W}:
\begin{align}
&\nabla^I W^-_{I1\infty1}\equiv \frac{1}{4}(\r\pa_\r-4)W^-_{\infty1\infty1},  \label{W1} \\
&\nabla^I W^-_{I0\infty0}\equiv \frac{1}{4}(\r\pa_\r-6)W^-_{\infty0\infty0}, \label{W2} \\
&\nabla^I W^-_{I\infty\infty1}\equiv \frac{i}{2} W^-_{\infty0\infty1}, \label{W3} \\
&\nabla^I W^-_{I\infty\infty0}\equiv 0. \label{W4}
\end{align}

By Lemma \ref{Einstein-tensor}, the variation of $E_{IJ}$ is given by 
\begin{align}
&\d E_{\infty\infty}=-\frac{1}{2}m(m-4)\psi_{00}-m(m-2)\psi_{1\ol1}+O(\r^{m+1}),  \label{dE1} \\
&\d E_{\infty0}=O(\r^{m+1}), \label{dE2} \\
&\d E_{\infty1}=-\frac{i}{2}(m+1)\psi_{01}+O(\r^{m+1}), \label{dE3} \\
&\d E_{00}=-\frac{1}{8}(m^2-6m-4)\psi_{00}+\frac{1}{2}(m-2)\psi_{1\ol1}+O(\r^{m+1}), \label{dE4} \\
&\d E_{01}=-\frac{1}{8}(m+1)(m-5)\psi_{01}+O(\r^{m+1}), \label{dE5} \\
&\d E_{1\ol1}=-\frac{1}{8}(m^2-6m-8)\psi_{1\ol1}+\frac{1}{8}(m-4)\psi_{00}+O(\r^{m+1}), \label{dE6} \\
&\d E_{11}=-\frac{1}{8}m(m-4)\psi_{11}+O(\r^{m+1}). \label{dE7}
\end{align}
The determinant of the coefficients of \eqref{dE4} and \eqref{dE6} as a system of linear equations for 
$\psi_{00}$ and $\psi_{1\ol1}$ is 
$$
\det
\begin{pmatrix}
-\frac{1}{8}(m^2-6m-4) & \frac{1}{2}(m-2) \\
\frac{1}{8}(m-4) & -\frac{1}{8}(m^2-6m-8)
\end{pmatrix}
=\frac{1}{64}m(m+2)(m-6)(m-8).
$$

First we consider the case of $m\le5$, where the determinant is nonzero. 
We determine $\psi_{00}$, $\psi_{1\ol1}$ (modulo $O(\r^{m+1})$) by \eqref{dE4}, \eqref{dE6} so that $E_{00}, E_{1\ol1}=O(\r^{m+1})$ holds. Then by the Bianchi identities \eqref{Bianchi1}, \eqref{Bianchi2}, we have 
$E_{\infty\infty}, E_{\infty0}=O(\r^{m+1})$. We determine $\psi_{01}$ by \eqref{dE3} to obtain $E_{\infty1}=O(\r^{m+1})$. Then, \eqref{Bianchi3} gives $E_{01}=O(\r^{m+1})$. When $m\le3$, \eqref{dE7} determines $\psi_{11}$ so that $E_{11}=O(\r^{m+1})$, thus we have $E_{IJ}=O(\r^{m+1})$. Moreover, by \eqref{W1}--\eqref{W3} and Lemma \ref{self-dual-lemma}, we also have $W^-_{IJKL}=O(\r^{m+1})$. When $m=4$, we cannot use \eqref{dE7} to obtain $E_{11}=O(\r^5)$. However, since $W^-_{IJKL}=O(\r^4)$, it follows from \eqref{W1} that
\begin{align*}
C^-_{1\infty1}=\nabla^I W^-_{I1\infty1}=\frac{1}{4}(4-4)W^-_{\infty1\infty1}+O(\r^5) 
=O(\r^5),
\end{align*}
so we have $E_{11}=O(\r^5)$ by \eqref{C1}. (This also follows from the fact that the CR obstruction tensor $\mathcal{O}_{11}=(\r^{-4}E_{11})|_M$ vanishes in three dimension; see \cite{Ma1, Ma2}.) Thus, we have $E_{IJ}=O(\r^5)$ and by \eqref{W1}--\eqref{W3}, it holds that 
$$
W^-_{\infty1\infty1}=O(\r^4), \quad W^-_{\infty0\infty0}, W^-_{\infty0\infty1}=O(\r^5).
$$
We can choose $\psi_{11}$ so that $W^-_{\infty1\infty1}=O(\r^5)$ holds since 
$$
\d W^-_{\infty1\infty1}=\psi_{11}+O(\r^5)
$$
by \eqref{var-W1}. Thus we obtain $g^{(5)}_{IJ}$ unique modulo $O(\r^{5})$ with $E_{IJ}, W^-_{IJKL}=O(\r^5)$. 
When $m=5$, we can construct $g^{(6)}_{IJ}$ with $E_{IJ}=O(\r^6)$ in the same way for  $m\le3$ and 
we also have $W^-_{IJKL}=O(\r^6)$ by \eqref{W1}--\eqref{W3}. 
 
Next we consider the case of $m=6$, where the equations \eqref{dE1}, \eqref{dE4}, \eqref{dE6} are not pairwise independent. We determine $\psi_{01}$ by \eqref{dE3} so that $E_{\infty1}=O(\r^7)$. Then we also have 
$E_{01}=O(\r^7)$ by \eqref{Bianchi3}. We determine $\psi_{11}$ by \eqref{dE7} and obtain $E_{11}=O(\r^7)$.
By \eqref{var-W2}, we have 
\begin{equation}\label{dW}
\d W^-_{\infty0\infty0}=\frac{14}{3}\psi_{00}-\frac{16}{3}\psi_{1\ol1}+O(\r^7).
\end{equation}
We use this equation and \eqref{dE1} to determine $\psi_{00}, \psi_{1\ol1}$ so that 
$$
E_{\infty\infty}=O(\r^7), \quad \eta=\lambda\mathcal{O}
$$
holds. Thus we have determined $g^{(7)}_{IJ}$ and we must check that it also satisfies 
$E_{00}, E_{1\ol1}, E_{\infty0}=O(\r^7)$. Since $W^-_{IJKL}=O(\r^6)$, by \eqref{W2} we have
$$
C^-_{0\infty0}=\frac{1}{4}(6-6)W^-_{\infty0\infty0}+O(\r^7)=O(\r^7).
$$
Then it follows from \eqref{C2} that 
$$
-\frac{5}{4}E_{00}+E_{1\ol1}=O(\r^7).
$$
Also, \eqref{Bianchi1} gives
$$
E_{00}+4E_{1\ol1}=O(\r^7).
$$
Therefore, we have $E_{00}, E_{1\ol1}=O(\r^7)$. Moreover, by $W^-_{IJKL}=O(\r^6)$ and the equation \eqref{W4}, it holds that $C^-_{\infty\infty0}=O(\r^7)$, which implies $E_{\infty0}=O(\r^7)$ by \eqref{C3}. 
Thus, $g^{(7)}_{IJ}$ satisfies $E_{IJ}=O(\r^7)$, $W^-_{IJKL}=O(\r^6)$, and $\eta=\lambda\mathcal{O}$. We note that it satisfies $W^-_{IJKL}=O(\r^7)$ when $\lambda=0$.

When $m=7$, we can determine $g^{(8)}_{IJ}$ so that it satisfies $E_{IJ}=O(\r^8)$ in the same way as for $m\le3$. If $\lambda=0$, it also satisfies $W^-_{IJKL}=O(\r^8)$ by \eqref{W1}--\eqref{W3}.

Let us consider the case of $m=8$. In this case, the equations \eqref{dE4} and \eqref{dE6} are not independent. We use \eqref{dE1}, \eqref{dE4} to determine $\psi_{00}, \psi_{1\ol1}$ so that $E_{\infty\infty}, E_{00}=O(\r^9)$. Then \eqref{Bianchi1} gives $E_{1\ol1}=O(\r^9)$. We determine $\psi_{01}$ and $\psi_{11}$ by \eqref{dE3} and \eqref{dE7} respectively and obtain $E_{\infty1}, E_{11}=O(\r^9)$. By \eqref{Bianchi2}, \eqref{Bianchi3}, we have $E_{\infty0}, E_{01}=O(\r^9)$. Thus we have constructed $g^{(9)}_{IJ}$ with 
$E_{IJ}=O(\r^9)$, which satisfies $W^-_{IJKL}=O(\r^9)$ when $\lambda=0$ by \eqref{W1}--\eqref{W3}.

Finally, let $m\ge9$. In this case, the equation $E_{IJ}=O(\r^{m+1})$ determines $g^{(m+1)}_{IJ}$ in the same way as in $m\le3$, and it satisfies $W^-_{IJKL}=O(\r^{m+1})$ by \eqref{W1}--\eqref{W3} when $\lambda=0$.

Consequently, we can construct all $g^{(m+1)}_{IJ}$ inductively,  and by Borel's lemma we obtain a solution $g^\lambda_{IJ}$ to 
$$
E_{IJ}=O(\r^{\infty}), \quad W^-_{IJKL}=O(\r^6), \quad \eta=\lambda\mathcal{O},
$$ 
which is unique modulo $O(\r^\infty)$. By the construction, $g^0_{IJ}$ satisfies $W^-_{IJKL}=O(\r^\infty)$.
Thus we complete the proof of Theorem \ref{main-thm}.

\subsection{Dependence on $\lambda$}
We can read off the dependence of $g^\lambda_{IJ}$ on the parameter $\lambda$ from the construction.

\begin{prop}\label{exp}
The metric $g^\lambda_{IJ}$ admits the following asymptotic expansion:
$$
g^\lambda_{IJ}\sim g^0_{IJ}+\sum_{k=1}^\infty \lambda^k \r^{6k}\phi^{(k)}_{IJ}(\r), \quad
\phi^{(k)}_{I\infty}=\phi^{(k)}_{01}=\phi^{(k)}_{11}=0.
$$
Here, $\phi^{(k)}_{IJ}(\rho)$ is a formal power series in $\rho$.
\end{prop}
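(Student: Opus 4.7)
The plan is to read off the expansion directly from the inductive construction of $g^\lambda_{IJ}$ in the proof of Theorem \ref{main-thm}, tracking the $\lambda$-dependence through each step.

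For $m\le 5$ the equations used at each inductive step do not involve $\lambda$, so $g^\lambda_{IJ}\equiv g^0_{IJ}$ modulo $O(\r^6)$, and the expansion starts at $k=1$. At the $m=6$ step, $\lambda$ enters only through the normalization $\eta=\lambda\mathcal O$: the invertible $2\times 2$ linear system formed by \eqref{dE1} (at $m=6$) and \eqref{dW} uniquely determines $(\psi_{00},\psi_{1\bar 1})$ with a contribution proportional to $\lambda\mathcal O$, while $\psi_{01}$ and $\psi_{11}$ are determined by \eqref{dE3} and \eqref{dE7} applied to $E_{\infty 1}(g^{(6)}),E_{11}(g^{(6)})$, which are $\lambda$-independent. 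This establishes the $k=1$ case with $\r^6\phi^{(1)}_{IJ}(0)$ supported in the $(00),(1\bar 1)$-block.

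For the propagation to all $k\ge 2$, I would proceed by strong induction on $m\ge 7$. Assuming $g^{(m)}_{IJ}-g^0_{IJ}$ has the claimed decomposition modulo $O(\r^m)$, the nonlinear quantities $E_{IJ}(g^{(m)}),W^-_{IJKL}(g^{(m)})$ and $\eta(g^{(m)})-\lambda\mathcal O$ are polynomials in $\lambda$ whose $\lambda^k$-coefficients have $\r$-valuation at least $6k$; this follows from the multilinear expansion of these polynomial curvature expressions on perturbations of $\r$-orders $\r^{6j}$ with total degree $k$. Solving the step-$m$ equations via the indicial operators \eqref{dE1}--\eqref{dE7} then produces $\psi^{(m)}_{IJ}$ of the same polynomial-in-$\lambda$ structure, preserving both the $\r^{6k}$-valuation and the block pattern, since the indicial operators have the block structure $(E_{\infty\infty},E_{00},E_{1\bar 1})\leftrightarrow (\psi_{00},\psi_{1\bar 1})$, $(E_{\infty 1},E_{01})\leftrightarrow\psi_{01}$, and $E_{11}\leftrightarrow\psi_{11}$.

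The main obstacle is proving that $\psi^{(m)}_{01}$ and $\psi^{(m)}_{11}$ remain $\lambda$-independent at every step. The indicial forcings $E_{\infty 1}\equiv -\tfrac i2(\r\pa_\r+1)\varphi_{01}$ and $E_{11}\equiv i\r^2 A_{11}-\r^4 A_{11,0}-\tfrac 18\r\pa_\r(\r\pa_\r-4)\varphi_{11}$ from Lemma \ref{Einstein-tensor} depend only on $\varphi_{01},\varphi_{11}$ and on $\lambda$-independent CR data, but the full expressions contain negligible-term cross-couplings to $\varphi_{00},\varphi_{1\bar 1}$ that could in principle introduce $\lambda$-dependence at subleading $\r$-orders. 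I expect to resolve this by invoking the Bianchi identities \eqref{Bianchi1}--\eqref{Bianchi3} together with the Cotton-type relations \eqref{C1}--\eqref{C3} and the self-duality $W^-(g^0)=O(\r^\infty)$, which combine to force the cross-term contributions arising from the $\eta=\lambda\mathcal O$ perturbation to cancel at each order, confirming $\phi^{(k)}_{01}=\phi^{(k)}_{11}=0$.
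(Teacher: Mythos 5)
Your treatment of the $\lambda$-dependence of the $(00)$ and $(1\bar 1)$ components is essentially the paper's argument: the parameter enters only at the $m=6$ step through the pair \eqref{dE1}, \eqref{dW} with right-hand side $\lambda\mathcal{O}$, and for $m\ge 7$ the bound ``the $\lambda^k$-coefficient has $\rho$-valuation $\ge 6k$'' follows by induction because the inhomogeneous terms in the step-$m$ linear systems are finite sums of products $\mathcal{D}_1\Phi^{\lambda,l_1}_{I_1J_1}\cdots\mathcal{D}_p\Phi^{\lambda,l_p}_{I_pJ_p}$ with $l_1+\cdots+l_p\le m$, so the degrees in $\lambda$ add subject to the same valuation count. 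That part of your proposal is correct and matches the paper.

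The genuine gap is exactly the point you flag as ``the main obstacle'': you do not prove $\phi^{(k)}_{01}=\phi^{(k)}_{11}=0$, you only state that you \emph{expect} the cross-couplings to cancel by combining \eqref{Bianchi1}--\eqref{Bianchi3}, \eqref{C1}--\eqref{C3} and $W^-(g^0)=O(\rho^\infty)$. As written this is not an argument, and it is not clear it can be made into one: those identities constrain components of $E_{IJ}$ and $W^-_{IJKL}$ among themselves, whereas the issue is the subleading dependence of $E_{\infty 1}$ and $E_{11}$ on $\varphi_{00},\varphi_{1\bar 1}$ through the negligible terms, which the Bianchi and Cotton relations do not see. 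The paper closes this step by a different and more direct route: it regards $\psi_{IJ}=g^\lambda_{IJ}-g^0_{IJ}=O(\rho^6)$ as a perturbation of the exact solution $g^0$ and applies the variation formulas \eqref{dE3} and \eqref{dE7} to the two equations $E_{\infty 1}=O(\rho^\infty)$, $E_{11}=O(\rho^\infty)$ satisfied by both metrics; since the indicial factors $-\tfrac{i}{2}(m+1)$ and $-\tfrac18 m(m-4)$ are invertible for every $m\ge 6$, the coefficients $\Phi^{\lambda,k}_{01}$ and $\Phi^{\lambda,k}_{11}$ are forced to vanish order by order. If you want a complete proof you should adopt this uniqueness-of-the-$(01)$-and-$(11)$-components argument (and, to be fully careful, verify that the $O(\rho^{m+1})$ remainders in \eqref{dE3}, \eqref{dE7} coming from the negligible terms do not feed the nonzero $\Phi^{\lambda,j}_{00},\Phi^{\lambda,j}_{1\bar 1}$, $j<m$, back into the $01$ and $11$ equations --- which is the substance of the concern you raised but left unresolved). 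Until that is done, your proposal establishes the expansion for $g^\lambda_{00}$ and $g^\lambda_{1\bar 1}$ but not the vanishing statement $\phi^{(k)}_{01}=\phi^{(k)}_{11}=0$ in the proposition.
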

\begin{proof}
We write the Taylor expansion of $g^\lambda_{IJ}-g^0_{IJ}$ as 
$$
g^\lambda_{IJ}-g^0_{IJ}\sim \sum_{k=0}^\infty \r^k \Phi^{\lambda, k}_{IJ}, \quad \Phi^{\lambda, k}_{I\infty}=0.
$$
Then, it suffices to show that $\Phi^{\lambda, k}_{01}=\Phi^{\lambda, k}_{11}=0$ and each $\Phi^{\lambda, k}_{IJ}$ is a polynomial in $\lambda$ of degree $\le k/6$. First, we note that 
$g^\lambda_{IJ}-g^0_{IJ}=O(\r^6)$ hence $\Phi^{\lambda, k}_{IJ}=0$ for $k\le5$.
Since both $g^\lambda_{IJ}$ and $g^0_{IJ}$ satisfy $E_{\infty1}, E_{11}=O(\r^\infty)$, we have $\Phi^{\lambda, k}_{01}=\Phi^{\lambda, k}_{11}=0$ also for $k\ge6$ by \eqref{dE3}, \eqref{dE7}. From \eqref{dE1}, \eqref{dW}, we see that 
$\Phi^{\lambda, 6}_{00}$ and $\Phi^{\lambda, 6}_{1\ol1}$ are determined by 
\begin{align*}
-6\Phi^{\lambda, 6}_{00}-24\Phi^{\lambda, 6}_{1\ol1}&=0, \\
\frac{14}{3}\Phi^{\lambda, 6}_{00}-\frac{16}{3}\Phi^{\lambda, 6}_{1\ol1}&=\lambda\mathcal{O}.
\end{align*}
Thus we have ${\rm deg}\,\Phi^{\lambda, 6}_{00}={\rm deg}\,\Phi^{\lambda, 6}_{1\ol1}=1$. Now we shall prove 
${\rm deg}\, \Phi^{\lambda, k}_{00}$, ${\rm deg}\, \Phi^{\lambda, k}_{1\ol1}\le k/6$ by the induction on $k$.
When $k\ge7$, $\Phi^{\lambda, k}_{00}$ and $\Phi^{\lambda, k}_{1\ol1}$ are determined by the condition 
$\pa^k_\r E_{00}|_{\r=0}=\pa^k_\r E_{1\ol1}|_{\r=0}=0$ for $k\neq8$ and 
$\pa^k_\r E_{\infty\infty}|_{\r=0}=\pa^k_\r E_{00}|_{\r=0}=0$ for $k=8$. These conditions can be regarded as a system of linear equations for $\Phi^{\lambda, k}_{00}$ and $\Phi^{\lambda, k}_{1\ol1}$, and in view of 
\eqref{nabla-bar}, \eqref{D-formula}, \eqref{Ricci},
the terms involving the other components are linear combinations of 
$$
\mathcal{D}_1\Phi^{\lambda, l_1}_{I_1J_1}\cdots\mathcal{D}_p\Phi^{\lambda, l_p}_{I_p J_p} \quad 
(l_1+\cdots+ l_p\le k,\ l_j<k),
$$
where $\mathcal{D}_j$ is a differential operator on $M$. Then, by the induction hypothesis, we have 
$$
{\rm deg}\, \Phi^{\lambda, k}_{IJ}\le \frac{l_1+\cdots+ l_p}{6}\le \frac{k}{6}.
$$
Thus, we complete the proof.
\end{proof}
\subsection{Evenness}
 Let $g$ be a normal form ACH metric on $M\times[0, \infty)_\r$. Then it can be written in the form 
\begin{equation}\label{normal-g}
g=\frac{h_\r+4d\r^2}{\r^2},
\end{equation}
where $h_\r$ is a family of Riemannian metrics on $M$. We say $g$ is {\it even} when $h_\r$ has even Laurent expansion at $\r=0$. In other words, $g$ is even if and only if the components 
$g_{00}, g_{11}, g_{1\ol1}$ are even in $\r$, and $g_{01}$ is odd in $\r$. An ACH metric is said to be even if its normal form is even for any choice of $\th$.
\begin{prop}
The ACH metric $g^\lambda$ is even.
\end{prop}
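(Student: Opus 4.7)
The plan is to introduce a $\mathbb{Z}/2$-parity grading on $\Theta$-tensor indices and show that the construction of $g^\lambda$ in \S 4.1 respects it. Assign $|\infty|=|0|:=0$ and $|1|=|\ol 1|:=1$, and call a formal power series $\Theta$-tensor $T_{I_1\cdots I_p}{}^{J_1\cdots J_q}$ \emph{parity-even} if each component has vanishing $\r^m$-Taylor coefficient whenever $m\not\equiv |I_1|+\cdots+|I_p|+|J_1|+\cdots+|J_q| \pmod 2$. The evenness claim about $g^\lambda$ is equivalent to $g^\lambda_{IJ}$ being parity-even in this sense, since the even-parity components $g_{00}, g_{11}, g_{1\ol 1}, g_{\infty\infty}$ then carry only even powers of $\r$ while the odd-parity component $g_{01}$ carries only odd powers.

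First I would check that when $g_{IJ}$ is parity-even, every curvature object built from $g$ is parity-even. The formulas \eqref{torsion} and \eqref{curvature} are designed so that the $\r$-power in front of each Tanaka--Webster object matches the parity of the attached indices (for instance $\ol T_{01}{}^{\ol 1}=\r^2 A_1{}^{\ol 1}$ has index-parity $0+1+1\equiv 0$ matching $\r^2$, and $\ol R_1{}^1{}_{01}=-\r^3 A_{11,}{}^1$ has index-parity $1+1+0+1\equiv 1$ matching $\r^3$), so $\ol T$ and $\ol R$ are parity-even. Moreover \eqref{nabla-bar} shows that $\ol\nabla_I$ shifts the $\r$-power by $|I|$ while adjoining an index of the same parity, so $\ol\nabla$ preserves the grading. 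Consequently \eqref{D-formula} and \eqref{comm-D} yield a parity-even $D_{IJ}{}^K$; and then \eqref{curvature-g}, \eqref{Ricci}, and \eqref{epsilon} make $R_{IJKL}, \Ric_{IJ}, E_{IJ}, P_{IJ}, W_{IJKL}, W^-_{IJKL}$, and $C^-_{IJK}$ all parity-even.

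I would then revisit the inductive construction of \S 4.1 starting from the parity-even seed $\varphi_{IJ}\equiv 0$ and show by induction on $m$ that each $g^{(m)}_{IJ}$ is parity-even. At step $m$ the correction $\psi_{IJ}$ is determined by linear equations whose coefficients come from \eqref{dE1}--\eqref{dE7}, \eqref{var-W1}, \eqref{var-W2}, and whose right-hand sides are $\r^m$-Taylor coefficients of parity-even $\Theta$-tensors formed from $g^{(m)}$ (the Einstein tensor, $W^-_{\infty 1\infty 1}$, $W^-_{\infty 0\infty 0}$, or the CR invariant $\eta$). When $m\not\equiv |I|+|J|\pmod 2$ the right-hand side vanishes by the induction hypothesis; since the coefficient of $\psi_{IJ}$ in the relevant equation is nonzero at such parity-mismatch values of $m$ (as one sees from the explicit linear factors and the determinant $\tfrac{1}{64}m(m+2)(m-6)(m-8)$ appearing in \S 4.1), this forces $(\psi_{IJ})_m=0$, so $g^{(m+1)}$ remains parity-even. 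Borel summation then yields a parity-even $g^\lambda$.

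The main point to watch is the exceptional steps $m=4, 6, 8$, where the linear system solved differs from the generic one. But each substitute normalization used there---$W^-_{\infty 1\infty 1}=O(\r^5)$ at $m=4$, the pair $\eta=\lambda\calO$ and $E_{\infty\infty}=O(\r^7)$ at $m=6$, and the pair of $E_{\infty\infty}, E_{00}=O(\r^9)$ at $m=8$---involves only components of total index-parity $0$, matching the even value of $m$, so the parity-mismatch components are never forced to be excited at those steps. The argument thus proceeds uniformly through all inductive regimes and gives the evenness of $g^\lambda$.
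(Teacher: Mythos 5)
Your argument is correct, but it takes a genuinely different route from the paper. The paper proves evenness by a reflection/uniqueness argument: using the Laurent expansion of $h_\r$ it extends the normal-form metric to $M\times(-\infty,0]_\r$, pulls back by $\iota(x,\r)=(x,-\r)$, checks that $\iota^*g^\lambda_-$ again satisfies $E_{IJ}=O(\r^\infty)$, $W^-_{IJKL}=O(\r^6)$ and $\eta=\lambda\mathcal{O}$ (the last point using that $\iota$ preserves the orientation and that $\mbox{\boldmath $Z$}_\infty|_M$, $\mbox{\boldmath $Z$}_0|_M$ are fixed, cf.\ Proposition \ref{Z-M}), and then concludes $\iota^*g^\lambda_-=g^\lambda+O(\r^\infty)$ from the uniqueness clause of Theorem \ref{main-thm}; evenness follows at once. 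That proof is shorter and never re-enters the induction, at the modest cost of the orientation and $\eta$-invariance checks. Your parity grading instead audits the construction step by step, and it is sound: the grading is respected by all the exact ingredients, and the three places where a parity-mismatched coefficient could be forced are exactly the linear systems you list, whose coefficients ($\tfrac{1}{64}m(m+2)(m-6)(m-8)$, $-\tfrac{i}{2}(m+1)$, $-\tfrac18 m(m-4)$) are indeed nonzero at the mismatched parities, while the exceptional normalizations at $m=4,6,8$ involve only parity-$0$ components at even orders. Two points you should make explicit if you write this up: the grading must be verified on the exact formulas \eqref{Gamma-bar}, \eqref{torsion}, \eqref{curvature}, \eqref{comm-D}, \eqref{D-formula}, \eqref{curvature-g}, \eqref{Ricci} and on $\varepsilon_{IJKL}$, not merely on the mod-negligible tables (which only control the coefficients of $\psi_{IJ}$, while the vanishing of the source terms needs the exact parity statement); and the displayed value $\ol R_{10}=\r^2A_{11,}{}^1$ in \S 2.3 is a typo for $\r^3A_{11,}{}^1$ --- the parity-consistent value that actually enters $E_{01}$ in Lemma \ref{Einstein-tensor} --- so it does not break your grading. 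What your approach buys is an explanation of \emph{why} evenness holds (all odd-order sources for the even-parity components vanish identically) and a self-contained argument independent of the uniqueness statement; what the paper's approach buys is brevity and automatic handling of the exceptional steps.
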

\begin{proof}
Fix a contact form $\th$ and suppose $g^\lambda$ is in the normal form as \eqref{normal-g}. By using the Laurent expansion of $h_\r$, we can regard the right-hand side of \eqref{normal-g} as an ACH metric $g^\lambda_{-}$ defined on $M\times (-\infty, 0]_\r$. Then, $g^\lambda_-$ also satisfies
$$
E_{IJ}=O(\r^\infty), \quad 
W^-_{IJKL}=O(\r^6), \quad \eta=\lambda\mathcal{O}
$$
with respect to the orientation satisfying
$$
i\bth^0\wedge\bth^1\wedge\bth^{\ol1}\wedge\bth^\infty
=i\r^{-5}\th\wedge\th^1\wedge\th^{\ol1}\wedge d\r>0.
$$
We consider the ACH metric $\iota^* g^\lambda_-$ on $M\times[0, \infty)_\r$, where 
$\iota(x, \r):=(x, -\r)$. Since $\iota$ preserves the orientation, $\iota^* g^\lambda_-$ satisfies
$$
E_{IJ}=O(\r^\infty), \quad W^-_{IJKL}=O(\r^6).
$$
Noting that $\iota_* \mbox{\boldmath $Z$}_\infty=\mbox{\boldmath $Z$}_\infty$ and 
$\iota_* \mbox{\boldmath $Z$}_0=\mbox{\boldmath $Z$}_0$, we have
\begin{align*}
\r^{-6}W^-[\iota^*g^\lambda_-]_{\infty0\infty0}
&=(\iota^*\r)^{-6}(\iota^*W^-[g^\lambda_-])
(\mbox{\boldmath $Z$}_\infty, \mbox{\boldmath $Z$}_0, \mbox{\boldmath $Z$}_\infty, \mbox{\boldmath $Z$}_0) \\
&=\iota^* \bigl(\r^{-6}W^-[g^\lambda_-]
(\iota_*\mbox{\boldmath $Z$}_\infty, \iota_*\mbox{\boldmath $Z$}_0, \iota_*\mbox{\boldmath $Z$}_\infty, \iota_*\mbox{\boldmath $Z$}_0)\bigr) \\
&=\iota^*(\r^{-6}W^-[g^\lambda_-]_{\infty0\infty0}).
\end{align*}
Thus, $\iota^*g^\lambda_-$ also satisfies $\eta=\lambda\mathcal{O}$. Therefore, by the uniqueness we 
obtain $\iota^*g^\lambda_- = g^\lambda+O(\r^\infty)$, which implies that $g^\lambda$ is even.
\end{proof}

\section{CR GJMS operators}
Matsumoto \cite{Ma3} generalized the CR GJMS operators to partially integrable CR manifolds via Dirichlet-to-Neumann type operators associated with eigenvalue equations for the Laplacian of the ACH metric. 
In dimension three, it is stated as follows:

\begin{thm}[{\cite[Theorem 3.3]{Ma3}}]
Let $M$ be a three dimensional strictly pseudoconvex CR manifold and $g$ an ACH metric on a $\Theta$-manifold $\ol X$ with the boundary $M$. Let $\theta$ be a contact form on $M$ and let $\r$ be the model defining function associated with $\th$. Then, for any $k\in\mathbb{N}_+$ and $f\in C^\infty(M)$, there exist $F, G\in C^\infty(\ol X)$ with $F|_M=f$ such that the function 
$u:=\r^{-k+2}F+(\r^{k+2}\log\r) G$ satisfies
$$
\Bigl(\Delta+\frac{k^2}{4}-1\Bigr)u=O(\r^\infty),
$$
where $\Delta=-g^{IJ}\nabla_I\nabla_J$ is the Laplacian of $g$. The function $G$ is unique modulo $O(\r^\infty)$ and $P_{2k}f:=(-1)^{k+1}k!(k-1)!/2\cdot G|_M$ defines a formally self-adjoint linear differential operator $\calE(k/2-1, k/2-1)\rightarrow \calE(-k/2-1, -k/2-1)$ which is independent of the choice of $\th$ and has the principal part $\Delta_b^k$.
\end{thm}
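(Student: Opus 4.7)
The plan is to carry out a formal Poisson-type analysis of the eigenvalue equation, mirroring the construction of GJMS operators on Poincar\'e metrics but adapted to the ACH scaling. First I compute the indicial operator of $\Delta + k^2/4 - 1$. Using the normal form $g = (4d\r^2 + h_\r)/\r^2$ one finds $\sqrt{|\det g|} \sim \r^{-5}$ in suitable coordinates, which gives
$$
\Delta(\r^s) = -\tfrac{1}{4}s(s-4)\r^s + O(\r^{s+1}),
$$
so the indicial polynomial of $\Delta + k^2/4 - 1$ is $I(s) = -\tfrac{1}{4}\bigl((s-2)^2 - k^2\bigr)$, with roots $s = 2 \pm k$. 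These two roots differ by the positive integer $2k$, which is precisely the situation that forces a logarithmic term in the formal solution.

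Starting from $u_0 = \r^{-k+2}f$, I would write $F = \sum_{j \ge 0}\r^j f_j$ with $f_0 = f$ and solve $(\Delta + k^2/4 - 1)u = O(\r^\infty)$ order by order. For $1 \le j \le 2k-1$, the value $I(2-k+j) = \tfrac{1}{4}j(2k-j)$ is nonzero, and expanding $\Delta$ in the $\r$-adic filtration coming from the extended Tanaka--Webster frame of \S 2.3 shows each $f_j$ is uniquely determined by a tangential differential operator applied to $f_0,\ldots,f_{j-1}$, with coefficients built from the pseudo-hermitian data. At the critical order $j = 2k$ the indicial value vanishes, producing an unavoidable obstruction of the form $L_0 f \cdot \r^{k+2}$ for some differential operator $L_0$ of order $2k$ on $M$. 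Introducing the log term $\r^{k+2}(\log\r)G_0$ and using $\Delta(\r^s\log\r) = I(s)\log\r \cdot \r^s + I'(s)\r^s + \cdots$ with $I'(k+2) = -k/2$, the log contribution exactly cancels this obstruction provided $G_0|_M$ equals $2L_0f/k$. From $j = 2k+1$ onward the indicial polynomial has no further zeros, so all remaining Taylor coefficients of both $F$ and $G$ are uniquely determined, and Borel's lemma produces genuine smooth $F, G \in C^\infty(\ol X)$.

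With $P_{2k}f := (-1)^{k+1}k!(k-1)!/2 \cdot G|_M$ so defined, three properties remain. For independence of $\th$, I would use the transformation law $\wh\r = e^{\U/2}\r + O(\r^2)$ from \eqref{rho}: this makes $\r^{-k+2}f$ the natural trivialization of a section of $\calE(k/2-1,\,k/2-1)$, and via $\log\wh\r = \log\r + \U/2 + O(\r)$ forces $G|_M$ to be a section of $\calE(-k/2-1,\,-k/2-1)$, as required. For the principal-part statement, tracking the highest tangential-derivative contribution at each step of the induction shows that each application of the indicial recursion produces one factor of $\Delta_b$; after $2k$ iterations, combined with the accumulated product $\prod_{j=1}^{2k-1} I(2-k+j)^{-1}$ and the factor $I'(k+2) = -k/2$, the chosen normalizing constant $(-1)^{k+1}k!(k-1)!/2$ makes the principal symbol equal $\Delta_b^k$. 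Formal self-adjointness follows by applying Green's identity to $u_f$ and $u_g$ for two boundary data: away from $M$ the bulk integrand vanishes since the eigenvalue equation holds to infinite order, and the leading asymptotics as $\r \to 0$ of the boundary flux produce exactly $\int_M (P_{2k}f)\cdot \ol g - \int_M f \cdot \ol{P_{2k}g}$.

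The main obstacle I expect is confirming that $j = 2k$ is the unique obstruction order and that the accumulated tangential operator at that order has $\Delta_b^k$ as its top symbol. Both points hinge on a careful decomposition of $\Delta$ in the $\Theta$-frame, using Lemma \ref{D} and \eqref{nabla-bar} to see that the $\infty$-derivatives strictly raise the $\r$-order while the tangential derivatives contribute the horizontal Laplacian to leading order; the remaining combinatorial identities needed to pin down the constant $(-1)^{k+1}k!(k-1)!/2$ are then a bookkeeping matter in the indicial iteration.
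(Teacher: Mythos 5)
Your proposal follows essentially the same route as the construction this paper reviews in \S 5 (and the cited source \cite{Ma3}): the indicial analysis of $\Delta+\tfrac{k^2}{4}-1$ with roots $2\pm k$, the recursive determination of $f^{(j)}$ for $j\le 2k-1$, the obstruction at $j=2k$ absorbed by the $\r^{k+2}\log\r$ term, Borel's lemma, the density weights read off from $\wh\r=e^{\U/2}\r+O(\r^2)$, and Green's identity for self-adjointness. Your indicial polynomial and the value $I(2-k+j)=\tfrac14 j(2k-j)$ agree with the paper's formula \eqref{inductive}, so the argument is correct and no further comment is needed.
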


We apply this theorem to our metric $g^\lambda$. Since $g^\lambda$ is determined to infinite order and 
the Taylor expansion has a universal expression in terms of the pseudo-hermitian structure, 
the operator $P^\lambda_{2k}$ has a universal expression in terms of Tanaka--Webster connection. Thus, we obtain the CR GJMS operators $P^\lambda_{2k}$ for all $k\ge1$. 

In order to prove that $P^\lambda_{2k}$ is a polynomial in $\lambda$ of degree $\le k/3$, we will review the detail of its construction. A linear differential operator on $\ol X$ is called a $\Theta$-{\it differential operator} if it is the sum of linear differential operators of the form 
$a Y_1\cdots Y_N$, where $a\in C^\infty(\ol X)$ and $Y_j\in\Gamma(^\Theta T\ol X)$. Note that a $\Theta$-differential operator preserves the subspace $\r^m C^\infty(\ol X)\subset C^\infty(\ol X)$ for each $m\ge1$. We fix a contact form $\th$ and denote the associated Tanaka--Webster connection by 
$\nabla^{\rm TW}$. Suppose that $g^\lambda$ is of the normal form
$$
g^\lambda=k_\r+4\frac{d\r^2}{\r^2}
$$
for $\th$, where $k_\r$ is a family of Riemannian metrics on $M$. Then, the Laplacian 
$\Delta$ of $g^\lambda$ is written as
\begin{equation}\label{Delta}
\Delta=-\frac{1}{4}(\r\pa_\r)^2+\r\pa_\r+\r^2\Delta_b -\r^4 T^2+\r\Psi
\end{equation}
with the $\Theta$-differential operator $\Psi$ defined by
\begin{equation*}
\begin{aligned}
\Psi f&=-\frac{1}{8}(\pa_\r\log\det k_\r)\r\pa_\r f-\r^{-1}\bigl((k^{-1}_\r)^{ij}-(k^{-1}_0)^{ij}\bigr)\nabla^{\rm TW}_i
\nabla^{\rm TW}_j f \\
&\quad +\frac{1}{2}(k^{-1}_\r)^{ij}(k^{-1}_\r)^{kl}\r^{-1}\bigl(\nabla^{\rm TW}_i(k_\r)_{jk}+
\nabla^{\rm TW}_j(k_\r)_{ik}-\nabla^{\rm TW}_k(k_\r)_{ij}\bigr)\nabla^{\rm TW}_l f.
\end{aligned}
\end{equation*}
Here, the components are with respect to a $\Theta$-frame $\{\mbox{\boldmath $Z$}_I\}$, and 
we note that $(k^{-1}_\r)^{ij}-(k^{-1}_0)^{ij}$ and $\nabla^{\rm TW}_i(k_\r)_{jk}+
\nabla^{\rm TW}_j(k_\r)_{ik}-\nabla^{\rm TW}_k(k_\r)_{ij}$ are $O(\r)$ by \eqref{Gamma-bar}.
In particular, $\Psi$ involves $\pa_\r g^\lambda_{IJ}$ but not higher order derivatives.

Given a function $f\in C^\infty(M)$, we try to solve the equation 
$$
\Bigl(\Delta+\frac{k^2}{4}-1\Bigr)(\r^{-k+2}F)=0
$$
for $F\in C^\infty(\ol X)$ with $F|_M=f$. Let $F\sim \sum_{j=0}^\infty f^{(j)}\r^{j},\ (f^{(j)}\in C^\infty(M))$ be the Taylor expansion of $F$ along $M$. By \eqref{Delta}, we have 
\begin{equation}\label{inductive}
\Bigl(\Delta+\frac{k^2}{4}-1\Bigr)(\r^{-k+2+j}f^{(j)})=\r^{-k+2+j}
\Bigl(-\frac{1}{4}j(j-2k)f^{(j)}+\r \mathcal{D}_j f^{(j)}\Bigr),
\end{equation}
where $\mathcal{D}_j$ is a $\r$-dependent linear differential operator on $M$. Starting with $f^{(0)}=f$, we  inductively define $f^{(j)}$ so that $F$ satisfies 
$$
\Bigl(\Delta+\frac{k^2}{4}-1\Bigr)(\r^{-k+2}F)=O(\r^{-k+3+j}).
$$
Let $\mathcal{D}_j\sim\sum_{l=0}^\infty \mathcal{D}_j^{(l)}\r^l$ be the Taylor expansion of $\mathcal{D}_j$. Then, by \eqref{inductive} $f^{(j)}$ is determined for $j\le2k-1$ as 
$$
f^{(j)}=\frac{4}{j(j-2k)}\sum_{l=0}^{j-1}\mathcal{D}_l^{(j-1-l)}f^{(l)}.
$$
We cannot define $f^{(2k)}$ due to the vanishing of the coefficient of $f^{(2k)}$ in \eqref{inductive}, and we need to introduce the logarithmic term $(\r^{k+2}\log\r) G$ in which the coefficient $G|_M$ is a multiple of 
$$
\Bigl(\r^{-k-2}\Bigl(\Delta+\frac{k^2}{4}-1\Bigr)(\r^{-k+2}F)\Bigl)\Big|_M.
$$
Therefore, up to a constant multiple, $P^\lambda_{2k}f$ is given by 
$$
\sum_{j=0}^{2k-1}\mathcal{D}_j^{(2k-1-j)}f^{(j)}.
$$
Since $\Psi$ involves only $g^\lambda_{IJ}$ and their first order derivatives in $\r$, $\mathcal{D}_j^{(l)}$ involves $\pa_\r^m g^\lambda_{IJ}$ for $m\le l+1$. Consequently, $P^\lambda_{2k}$ is written in terms of 
$\pa_\r^m g^\lambda_{IJ}\ (m\le 2k)$, and by Proposition \ref{exp} it is a polynomial in $\lambda$ of degree
$\le k/3$. Thus we complete the proof of Theorem \ref{GJMS}.

\section{Convergence of the formal solutions}

We will prove Theorem \ref{convergence}, which asserts that the formal solution $g^\lambda$ converges to a real analytic ACH metric near $M$ when $M$ is a real analytic CR manifold. In the case of $\lambda=0$, this recovers the result of Biquard \cite{B}.
The key tool is the result of  Baouendi--Goulaouic \cite{BG} on the unique existence of the solution to a singular nonlinear Cauchy problem. Let us state their theorem in a form which fits to our setting. 

We regard local coordinates $(x, \r)$ of $M\times[0, \infty)_\r$ as complex 
variables and consider an equation for a $\mathbb{C}^N$-valued holomorphic function $v(x, \r)$ of the form
\begin{equation}\label{cauchy}
\begin{aligned}
(\r\pa_\r)^m v+ A_{m-1}(\r\pa_\r)^{m-1}v&+\cdots+A_0 v \\
 &=F(x, \r, \{(\r\pa_\r)^l\pa_x^\a(\r v)\}_{l+|\a|\le m,\ l<m}),
\end{aligned}
\end{equation}
where $A_j$ is an $N\times N$ matrix and $F(x, \r, \{y_{l, \a}\}_{l+|\a|\le m,\ l<m})$ is a holomorphic function near 0. For each $k\in\mathbb{N}$, we set
$$
\mathcal{P}(k):=k^m I+k^{m-1}A_{m-1}+\cdots+A_0,
$$
where $I$ is the identity matrix of size $N$. Then, by \cite[Theorem 3.1]{BG} we have the following theorem:
\begin{thm}\label{solution}
If $\det\mathcal{P}(k)\neq0$ for all $k\in\mathbb{N}$, the equation \eqref{cauchy} has a unique holomorphic solution $v(x, \r)$ near $(0, 0)$. 
\end{thm}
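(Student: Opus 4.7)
The plan is to prove existence and uniqueness of a formal power series solution first, and then upgrade to convergence by a scale-of-Banach-spaces argument in the style of Ovsjannikov--Nirenberg, or equivalently by the method of majorants. The structural feature that makes the proof work is that $\r\pa_\r$ acts diagonally on monomials $\r^k$ as multiplication by $k$, so the polynomial $\mathcal{P}(k)$ literally controls the $k$-th Taylor coefficient.

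First I would expand $v(x,\r) = \sum_{k\ge 0} v_k(x)\,\r^k$. Substituting into the left-hand side of \eqref{cauchy} gives $\sum_{k\ge 0}\mathcal{P}(k) v_k(x)\,\r^k$. On the right-hand side, every argument of $F$ has the form $(\r\pa_\r)^l\pa_x^\a(\r v)$; since $\r v=\sum_{j\ge 0} v_j(x)\r^{j+1}$ carries an extra factor of $\r$, its $\r^k$ coefficient is $k^l\pa_x^\a v_{k-1}(x)$. Hence the $\r^k$ coefficient of the right-hand side depends only on $v_0,\dots,v_{k-1}$ and their $x$-derivatives, together with the Taylor coefficients of $F$. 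Matching coefficients yields a recursion
$$
\mathcal{P}(k)\,v_k(x)=G_k\bigl(x,v_0(x),\dots,v_{k-1}(x)\bigr),
$$
and the hypothesis $\det\mathcal{P}(k)\ne 0$ lets us invert $\mathcal{P}(k)$ and determine $v_k$ uniquely. This gives a unique formal solution $v\in\mathbb{C}\{x\}[[\r]]^N$. At this stage I would also record the quantitative bound $\|\mathcal{P}(k)^{-1}\|\le C(1+k)^{-m}$: for $k$ beyond some threshold determined by $\|A_j\|$, $\mathcal{P}(k)=k^m(I+O(1/k))$ and a Neumann series gives $\|\mathcal{P}(k)^{-1}\|\le 2k^{-m}$, while the finitely many small $k$ are handled by the nonvanishing hypothesis. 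This decay by $k^{-m}$ is exactly the derivative gain needed to offset the $m$ derivatives $F$ can take of $\r v$.

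The convergence step is the core of the proof and where I expect the main work. I would introduce the scale of Banach spaces $E_s$ ($0<s<s_0$) of holomorphic $\mathbb{C}^N$-valued functions on the polydisc of radius $s$ in $x$, with supremum norm $|\cdot|_s$, and work with formal power series in $\r$ whose $k$-th coefficient lies in $E_{s_0-\varepsilon k}$ for a small $\varepsilon>0$. The Cauchy inequality $|\pa_x^\a u|_{s'}\le\a!\,(s-s')^{-|\a|}|u|_s$ converts the $|\a|\le m$ $x$-derivatives appearing as arguments of $F$ into a loss of $(s-s')^{-|\a|}$, which is compensated by the weight $\varepsilon^{-k}$. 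Setting up the fixed-point iteration $v^{(N+1)}=\mathcal{P}(\r\pa_\r)^{-1}F(\cdots,v^{(N)})$ in this scale and using the bound from the previous paragraph, one shows the iteration is a contraction on a sufficiently small polydisc. Equivalently, one may construct a scalar majorant equation $(\r\pa_\r)^m V=F^{\ast}(\cdots,\r V)$ with $F^{\ast}$ a standard Cauchy--Kowalevski majorant of $F$, solve it by the implicit function theorem after dividing by $\mathcal{P}(k)$ (whose inverse is majorized by a geometric series in $k$), and conclude by Weierstrass majorization.

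The main obstacle is the bookkeeping of loss-of-regularity in $x$ against the $k^{-m}$ gain from $\mathcal{P}(k)^{-1}$. The balance is tight: $F$ can carry exactly $m$ $x$-derivatives of $\r v$, and $\mathcal{P}(k)^{-1}$ gives back exactly $m$ orders of $k^{-1}$. This is the same mechanism that drives the classical Cauchy--Kowalevski theorem, and the Ovsjannikov--Nirenberg scale-of-Banach-spaces framework packages it cleanly while accommodating the singular factor $\r$ in front of $\pa_\r$. Uniqueness at the convergent level follows from uniqueness at the formal level, since any two holomorphic solutions have the same Taylor series at $\r=0$.
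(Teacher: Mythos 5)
Your outline is mathematically sound, but it takes a genuinely different route from the paper. The paper does not prove Theorem \ref{solution} from scratch: it obtains it as a special case of Baouendi--Goulaouic's Theorem 3.1, whose original form allows a right-hand side that is merely $C^\infty$ in $\r$ with values in a Banach space $B$ of bounded holomorphic functions of $x$; the only work the paper does is the reduction (observing that $G(\r,\{y_{l,\a}\}):=F(x,\r,\{y_{l,\a}(x)\})$ fits that framework) and the upgrade to holomorphy in $\r$ via their Remark 2.2 together with the observation that $v(x,\r^m)$ is holomorphic. What you propose is, in effect, a self-contained reproof of the analytic case of the Baouendi--Goulaouic theorem: the formal recursion $\mathcal{P}(k)v_k=G_k(x,v_0,\dots,v_{k-1})$ with the index shift coming from the factor $\r$ in $\r v$, the bound $\|\mathcal{P}(k)^{-1}\|\le C(1+k)^{-m}$, and the Ovsjannikov--Nirenberg/majorant balancing of the loss $k^l(s-s')^{-|\a|}$ (with $l+|\a|\le m$, $l<m$) against the gain $k^{-m}$. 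You have correctly identified the crux --- that the $k^{-m}$ decay of $\mathcal{P}(k)^{-1}$ exactly compensates the up to $m$ derivatives carried by the arguments of $F$ --- and your uniqueness argument (formal uniqueness implies uniqueness of holomorphic solutions) is the right one. The trade-off is clear: the paper's citation-based proof is short and avoids re-deriving delicate estimates, but requires the slightly awkward detour through $C^\infty$-in-$\r$ solutions; your direct approach is longer in full detail (the majorant bookkeeping for the nonlinear composition with $F$, which you only sketch, is the real labor) but yields holomorphy in $\r$ immediately and makes the role of $\det\mathcal{P}(k)\neq0$ transparent. Neither route has a gap; just be aware that to count as a complete proof your convergence step would need the majorant or scale-of-spaces estimates written out, including the treatment of products of lower-order coefficients arising from the analytic nonlinearity of $F$.
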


In the original statement of \cite[Theorem 3.1]{BG}, the right-hand side of the equation \eqref{cauchy} is replaced by $G(\r, \{(\r\pa_\r)^l\pa_x^\a(\r v)\}_{l+|\a|\le m,\ l<m})$
with $G$ a $C^\infty$-map
$$
G: \mathbb{C}\times B^{N^\prime}\longrightarrow B,
$$
where $B$ is the Banach space of $\mathbb{C}^N$-valued bounded holomorphic functions of
$x$ on a fixed polydisc, and $N^\prime$ is the number of multiindices $(l, \a)$ such that 
$l+|\a|\le m,\ l<m$. Also, the solution $v$ is given as a $C^\infty$-function of $\r$ valued in $B$. In our equation \eqref{cauchy}, $G$ is given by $G(\r, \{y_{l, \a}\}):=F(x, \r, \{y_{l, \a}(x)\})$. Since  
this is analytic in $\r$, it follows from \cite[Remark 2.2]{BG} and the proof of \cite[Theorem 3.1]{BG} that the solution $v(x, \r)$ is $C^\infty$ and $v(x, \r^m)$ is holomorphic, which implies that $v(x, \r)$ itself is holomorphic. Thus we obtain Theorem \ref{solution} as a special case of their theorem.

Now we apply this theorem to our case. We assume that $M$ is a real analytic CR manifold. Let $g^\lambda_{IJ}$ be the components of the formal solution $g^\lambda$ in a $\Theta$-frame $\{\mbox{\boldmath $Z$}_I\}$, and let 
$$
g^{(k)}_{IJ}:=\frac{1}{k!}\pa^k_\r g^\lambda_{IJ}\big|_M
$$
be the Taylor coefficients, which are analytic functions on $M$. We consider an ACH metric of the form
$$
\wt g^\lambda_{IJ}=\sum_{k=0}^{8}\r^k g^{(k)}_{IJ}+\r^9 \wt\varphi_{IJ},
$$
which automatically satisfies $E_{IJ}=O(\r^9)$. Then, we consider the equation
\begin{equation}\label{E-nine}
-8\r^{-9}(E_{00}, E_{1\ol1}, E_{01}, E_{11})=0
\end{equation}
for $v=(\wt\varphi_{00}, \wt\varphi_{1\ol1}, \wt\varphi_{01}, \wt\varphi_{11})$.
We shall show that this equation is written in the form \eqref{cauchy} for $m=2$ and satisfies the assumption of Theorem \ref{solution}; then we can conclude that $g^\lambda_{IJ}$ converges
since it gives the Taylor expansion of the solution $v$.

We see that in Lemma \ref{Einstein-tensor} the negligible term which we ignored in the computation of 
$E_{IJ}$ is an analytic function in 
$$
x, \  \r, \ \r(\r\pa_\r)^l\pa^\a_x(\r^9\wt\varphi) \ {\rm for}\ l+|\a|\le2,\ l<2.
$$
Thus, it can be written in the form
$$
f^{(1)}_{IJ}(x, \r)+\r^9 f^{(2)}_{IJ}(x, \r, \{(\r\pa_\r)^l\pa^\a_x(\r\wt\varphi)\}_{l+|\a|\le2,\ l<2})
$$
with analytic functions $f^{(1)}_{IJ}, f^{(2)}_{IJ}$. Then, by Lemma \ref{Einstein-tensor}, we have 
\begin{align*}
-8E_{00}&=I_1(\r\pa_\r)\varphi_{00}+I_2(\r\pa_\r)\varphi_{1\ol1} \\
&\quad + 16\r^4|A|^2+f^{(1)}_{00}(x, \r)
+\r^9 f^{(2)}_{00}(x, \r, \{(\r\pa_\r)^l\pa^\a_x(\r\wt\varphi)\}_{l+|\a|\le2,\ l<2}),
\end{align*}
where 
$$
\varphi_{IJ}=\sum_{k=1}^{8}\r^k g^{(k)}_{IJ}+\r^9 \wt\varphi_{IJ}
$$
and 
$$
I_1(t)=t^2-6t-4, \quad I_2(t)=-4(t-2).
$$
Since $E_{00}=O(\r^9)$, we have
$$
I_1(\r\pa_\r)\Bigl(\sum_{k=1}^{8}\r^k g^{(k)}_{00}\Bigr)
+I_2(\r\pa_\r)\Bigl(\sum_{k=1}^{8}\r^k g^{(k)}_{1\ol1}\Bigr)+16\r^4|A|^2+f^{(1)}_{00}(x, \r)=\r^9 f^{(0)}_{00}(x, \r)
$$
with some analytic function $f^{(0)}_{00}$. Therefore, the equation $-8\r^{-9}E_{00}=0$ is written as
$$
I_1(\r\pa_\r+9)\wt\varphi_{00}+I_2(\r\pa_\r+9)\wt\varphi_{1\ol1}
+F_{00}(x, \r, \{(\r\pa_\r)^l\pa^\a_x(\r\wt\varphi)\}_{l+|\a|\le2,\ l<2})=0
$$
with an analytic function $F_{00}$.

Similarly, the equations $-8\r^{-9}E_{IJ}=0$ for $(I, J)=(1, \ol1), (0, 1), (1,1)$ are respectively written as
\begin{align*}
I_3(\r\pa_\r+9)\wt\varphi_{00}+I_4(\r\pa_\r+9)\wt\varphi_{1\ol1}
+F_{1\ol1}(x, \r, \{(\r\pa_\r)^l\pa^\a_x(\r\wt\varphi)\}_{l+|\a|\le2,\ l<2})&=0, \\
I_5(\r\pa_\r+9)\wt\varphi_{01}
+F_{01}(x, \r, \{(\r\pa_\r)^l\pa^\a_x(\r\wt\varphi)\}_{l+|\a|\le2,\ l<2})&=0, \\
I_6(\r\pa_\r+9)\wt\varphi_{11}
+F_{11}(x, \r, \{(\r\pa_\r)^l\pa^\a_x(\r\wt\varphi)\}_{l+|\a|\le2,\ l<2})&=0,
\end{align*}
where $F_{1\ol1}, F_{01}, F_{11}$ are analytic functions and 
$$
I_3(t)=-t+4, \quad I_4(t)=t^2-6t-8, \quad I_5(t)=(t+1)(t-5), \quad I_6(t)=t(t-4).
$$
Hence the equation \eqref{E-nine} is of the form \eqref{cauchy}, and  we have 
\begin{align*}
\det\mathcal{P}(k)&=\det
\begin{pmatrix}
I_1(k+9) & I_2(k+9) & {} & {} \\
I_3(k+9) & I_4(k+9) & {} & {} \\
{} & {} & I_5(k+9) & {} \\
{} & {} & {} & I_6(k+9) 
\end{pmatrix} \\
&=(k+1)(k+3)(k+4)(k+5)(k+9)^2(k+10)(k+11) \\
&\neq 0
\end{align*}
for any $k\in\mathbb{N}$. Thus, by Theorem \ref{solution} the equation \eqref{E-nine} has a unique 
holomorphic solution and we complete the proof of Theorem \ref{convergence}.


\begin{thebibliography}{99}
\bibitem{BG}
M. S. Baouendi and C. Goulaouic,
{\it Singular nonlinear Cauchy problems},
J. Diff. Eq. {\bf 22} (1976), 268--291.

\bibitem{B} 
O. Biquard,
{\it Sur les vari\'et\'es CR de dimension 3 et les twisteurs},
Ann. Inst. Fourier {\bf 57} (2007), no. 4, 1161--1180.


\bibitem{CSS} 
A. \v{C}ap, J. Slov\'{a}k, and V. Sou\v{c}ek, 
{\it Bernstein-Gelfand-Gelfand Sequences}, 
Ann. of Math. {\bf 154} (2001), 97--113.

\bibitem{CL} 
J.-H. Cheng and J.M. Lee,
{\it The Burns-Epstein invariant and deformation of CR structures},
Duke Math. J. {\bf 60} (1990) 221--254.


\bibitem{EMM}
C. L. Epstein, R. B. Melrose, and G. A. Mendoza,
{\it Resolvent of the Laplacian on strictly pseudoconvex domains},
Acta Math. {\bf 167} no. 1-2 (1991), 1--106.


\bibitem{F}
 C.\ L. Fefferman, 
 {\it Monge--Amp\`ere equations, the Bergman kernel, and geometry of pseudoconvex domains}, 
Ann. of Math. {\bf 103} (1976), 395--416; erratum {\bf 104} (1976), 393--394.

\bibitem{FG1} 
C. L. Fefferman, C. R. Graham, 
{\it $Q$-curvature and Poincar\'e metrics},
Math. Res. Lett. {\bf 9} (2002), 139--151.

\bibitem{FG2} 
C. L. Fefferman, C. R. Graham, 
{\it The ambient metric}, 
Princeton Univ. Press. 2011.

\bibitem{FH}
 C. Fefferman and K. Hirachi,
{\it Ambient metric construction of $Q$-curvature in conformal and CR geometries}, 
Math. Res. Lett. {\bf 10} (2003) 819--832.

\bibitem{GG}
A. R. Gover and C. R. Graham,
{\it CR invariant powers of the sub-Laplacian},
J. Reine Angew. Math. {\bf 583} (2005), 1--27.

\bibitem{GH}
A. R. Gover and K. Hirachi,
{\it Conformally invariant powers of the Laplacian---A complete non-existence theorem},
Jour. Amer. Math. Soc. {\bf 17} (2004), 389--405.

\bibitem{G}
C. R. Graham,
{\it Scalar boundary invariants and the Bergman kernel},
in Complex Analysis II, Lect. Notes in Math. 1276 pp. 108--135, Springer, 1987.

\bibitem{GJMS}
C. R. Graham, R. Jenne, L. J. Mason,  and G. A. J. Sparling, 
{\it Conformally invariant powers of the Laplacian, I: Existence}, 
J. London Math. Soc. {\bf 46} (1992), 557--565.

\bibitem{GZ}
C. R. Graham and M. Zworski,
{\it Scattering matrix in conformal geometry},
Invent. Math. {\bf 152} (2003), 89--118.

\bibitem{HPT}
P. Hislop, P. Perry, and S.-H. Tang, 
{\it CR-invariants and the scattering operator for complex manifolds with boundary},
Analysis \& PDE {\bf 1} (2008), 197--227.

\bibitem{LeB}
C. LeBrun,
{\it $\mathcal{H}$-space with a cosmological constant},
Proc. Roy. Soc. London A {\bf 380} (1982), 171--185.

\bibitem{L}  
J.M. Lee, 
{\it Pseudo-Einstein structures on CR manifolds}, 
Amer.\ J.\ Math. {\bf 110} (1988) 157--178.

\bibitem{LM} 
J.M. Lee and R. Melrose, 
{\it Boundary behaviour of the complex Monge-Amp\`ere equation}, Acta Math. {\bf 148} (1982) 159--192.

\bibitem{Ma1}
Y. Matsumoto,
{\it Asymptotically complex hyperbolic Einstein metrics and CR geometry},
Ph.D. Thesis, The University of Tokyo, 2013.

\bibitem{Ma2}
Y. Matsumoto,
{\it Asymptotics of ACH-Einstein metrics},
J. Geom. Anal. {\bf 24} (2014), no. 4, 2135--2185.

\bibitem{Ma3}
Y. Matsumoto,
{\it GJMS operators, $Q$-curvature, and obstruction tensor of partially integrable CR manifolds},
Diff. Geom. Appl. {\bf 45} (2016), 78--114.
\end{thebibliography}
\end{document}